\newtheorem{theorem}{Theorem}
\newtheorem{corollary}[theorem]{Corollary}
\newtheorem{definition}[theorem]{Definition}
\newtheorem{example}[theorem]{Example}
\newtheorem{lemma}[theorem]{Lemma}
\newtheorem{proposition}[theorem]{Proposition}
\newtheorem{remark}[theorem]{Remark}
\newenvironment{proof}[1][Proof]{\noindent\textbf{#1.} }{\ \rule{0.5em}{0.5em}}
\begin{document}

\begin{center}
\renewcommand{\thefootnote}{}\footnotetext{\textbf{MSC (2010).} 17A30,
17B30, 53C05.
\par
\textbf{Key words and phrases. }Left-invariant affine connections,
Left-symmetric algebras, Novikov algebras, Derivation algebras.}{\large On a
remarkable class of left-symmetric algebras and its relationship with the
class of Novikov algebras}

\bigskip\ 

\bigskip {\large M. Guediri}

\vspace{0.5in}
\end{center}

$\mathtt{Abstract.}$\emph{\ }\textsl{We discuss locally simply transitive
affine actions of Lie groups }$G$ \textsl{on finite-dimensional vector spaces%
} \textsl{such that the commutator subgroup\ }$\left[ G,G\right] $ \textsl{%
is acting by translations. In other words, we consider left-symmetric
algebras satisfying the identity }$\left[ x,y\right] \cdot z=0.$\textsl{\ We
derive some basic characterizations of such left-symmetric algebras and we
highlight their relationships with the so-called Novikov algebras and
derivation algebras.}

\section{Introduction}

Let $M$ be a differentiable manifold together with a flat torsion-free
connection $\nabla ,$ and let $\mathfrak{X}\left( M\right) $ denote the
space of all vector fields on $M.$ Endowing $\mathfrak{X}\left( M\right) $
with the product defined by $X\cdot Y=\nabla _{X}Y,$ we observe that the
flatness of $\nabla $ yields the identity%
\begin{equation}
\left( X\cdot Y\right) \cdot Z-X\cdot \left( Y\cdot Z\right) =\left( Y\cdot
X\right) \cdot Z-Y\cdot \left( X\cdot Z\right) ,  \label{eq1}
\end{equation}%
for all $X,Y,Z\in \mathfrak{X}\left( M\right) .$

A finite-dimensional vector space together with a product satisfying the
above identity is called \emph{a left-symmetric algebra.} The concept of a
left-symmetric algebra has its roots in the paper \cite{cartan}, and the
term was first introduced in \cite{vinberg1} (see also \cite{vinberg2}).

\smallskip

If $\left( A,\cdot \right) $ is a left-symmetric algebra over a field $%
\mathbb{F}$, then the binary operation defined by $\left[ x,y\right] =x\cdot
y-y\cdot x$ makes $A$ into a Lie algebra called the associated Lie algebra.
Conversely, if $\mathcal{G}$ is a Lie algebra together with a left-symmetric
product such that $\left[ x,y\right] =x\cdot y-y\cdot x,$ then we say that
this left-symmetric product is compatible with the Lie structure of $%
\mathcal{G}.$ For instance, if $\nabla $ is a flat torsion-free connection
on a differentiable manifold $M,$ then it is known that the space $\mathfrak{%
X}\left( M\right) $ is an infinite dimensional Lie algebra under the
standard Lie bracket of vector fields. In this case, the left-symmetric
product on $\mathfrak{X}\left( M\right) $ defined by $X\cdot Y=\nabla _{X}Y$
is compatible with the standard Lie structure of $\mathfrak{X}\left(
M\right) .$

\smallskip

Let now $M=G$ be a finite dimensional Lie group with Lie algebra $\mathcal{G}%
,$ and let $\nabla $ be a left-invariant affine connection (i.e., a
left-invariant flat torsion-free connection) on $G.$ As mentioned above, by
identifying the Lie algebra $\mathcal{G}$ of $G$ with the set of all
left-invariant vector fields on $G,$ we see that $\mathcal{G}$ becomes a
finite dimensional left-symmetric subalgebra of $\mathfrak{X}\left( G\right)
.$ Conversely, if $\mathcal{G}$ is endowed with a left-symmetric product
which is compatible with the Lie structure of $\mathcal{G},$ then by means
of the conditions 
\begin{equation*}
\left( fX\right) \cdot Y=f\left( X\cdot Y\right) ,\ \ \ \ \ \ \ X\cdot
\left( fY\right) =X\left( f\right) \cdot Y+f\left( X\cdot Y\right) ,
\end{equation*}%
for all $X,Y\in \mathfrak{X}\left( G\right) $ and $f\in C^{\infty }\left(
G\right) ,$ we can extend this left-symmetric product to a left-invariant
affine connection on $G.$

To summarize, we deduce that giving a left-invariant affine connection on a
Lie group $G$ is precisely equivalent to giving a compatible left-symmetric
product on its Lie algebra $\mathcal{G}.$ Moreover it turns out that, under
this equivalence, bi-invariant affine connections on $G$ correspond exactly
to associative produtcs on $\mathcal{G}$ (cf. \cite{milnor}, p. 186; see
also \cite{medina}, Proposition 1.1).

\smallskip

Another geometric source of left-symmetric algebras comes from the geometry
of locally simply transitive actions of Lie groups by affine transformations
on vector spaces. It turns out that a simply connected Lie group $G$ admits
a left-invariant affine connection if and only if $G$ acts locally simply
transitively on a vector space $E$ by affine transformations (cf. \cite{kim}%
). Accordingly, if $G$ is a simply connected Lie group with Lie algebra $%
\mathcal{G},$ then giving a locally simply transitive affine action of $G$
on a vector space $E$ can be interpreted as giving a compatible
left-symmetric product on $\mathcal{G}.$ In this respect, it has been proved
in \cite{auslander} that a simply connected Lie group $G$ which acts simply
transitively by affine transformations of $\mathbb{R}^{n}$ is necessarily
solvable. This result leads to the question raised in \cite{milnor} of
whether every solvable simply connected Lie group $G$ can act simply
transitively by affine transformations of $\mathbb{R}^{n}.$ A negative
answer to this question can be found in \cite{benoist}.

\bigskip

The object of this paper is to introduce a new class of left-symmetric
algebras, namely, the class of finite-dimensional left-symmetric algebras $A$
over a field $\mathbb{F}$ satisfying the identity%
\begin{equation}
\left( x\cdot y\right) \cdot z=\left( y\cdot x\right) \cdot z,\ \ \ \text{%
for all }x,y,z\in A,  \label{eq4}
\end{equation}%
or, equivalently, 
\begin{equation}
\left[ x,y\right] \cdot z=0,\ \ \ \text{for all }x,y,z\in A.  \label{eq4a}
\end{equation}

In terms of affine actions, this is equivalent to consider locally simply
transitive affine actions of Lie groups $G$ on a vector space $E$ such that
the commutator subgroup $\left[ G,G\right] $\ is acting by translations.
Left-symmetric algebras over $\mathbb{R}$ satisfying the identity (\ref{eq4}%
) are classified up to dimesnsion $4$ in \cite{kholoud}.

\smallskip

The paper is organized as follows. In Section 2, after recalling the
definition of left-symmetric algebras, we give the definitions of the
so-called Novikov algebras and derivation algebras. These are defined as
left-symmetric algebras satisfying the identities $\left( x\cdot y\right)
\cdot z=\left( x\cdot z\right) \cdot y$ and $\left( x\cdot y\right) \cdot
z=\left( z\cdot y\right) \cdot x,$ respectively. We then consider
left-symmetric algebras satisfying the identity (\ref{eq4}). We give some
examples showing that these three conditions are pairwise independent, and
we prove that if any two of these three conditions are satisfied for a given
left-symmetric algebra then the third condition is satisfied as well. We
also give a criterion that connects associative algebras satisfying (\ref%
{eq4}) and Novikov assocative algebras, and we deduce that the Lie algebra
associated to a Novikov assocative algebra is necessarily $2$-step solvable.

In Section 3, we introduce the notion of center for a left-symmetric algebra
and we turn our attention to the question as to when this center coincides
with the center of the associated Lie algebra. For instance, we show that
these two centers coincide if the left-symmetric algebra is Novikov or
derivation, and we give an example of a left-symmetric algebra satisfying (%
\ref{eq4}) so that its center does not coincide with the center of its
associated Lie algebra. We also show that the center of a left-symmetric
algebra that is Novikov or derivation is a two-sided ideal, and we give an
example of a left-symmetric algebra satisfying the (\ref{eq4}) (and which is
of course neither Novikov nor derivation) so that its center fails to be a
two-sided ideal. In the last two subsections of this section, we introduce
the translational center of a left-symmetric algebra. In terms of affine
actions, this corresponds to the set of translations lying in the center of
the group. We show that every left-symmetric algebra satisfying (\ref{eq4})
whose associated Lie algebra is nilpotent can be obtained as a central
extension of a smaller dimensional left-symmetric algebra satisfying (\ref%
{eq4}). We also show that the translational center of a noncommutative
derivation algebra is non-trivial, and we deduce from this that a derivation
algebra whose associated Lie algebra is nonsingular nilpotent is necessarily
complete, that is, all right multiplications are nilpotent transformations.

In Section 4, we introduce two notions of a radical for a left-symmetric
algebra $A$, namely the Koszul radical $R\left( A\right) $ and the
right-nilpotent radical $N\left( A\right) .$ If $A$ is a left-symmetric
algebra satisfying (\ref{eq4}), we show that $R\left( A\right) $ is a
two-sided ideal of $A$ containing the derived Lie algebra $\left[ A,A\right]
.$ As a consequence, we conclude that every left-symmetric algebra
satisfying (\ref{eq4}) can be obtained as an extension of a commutative
associative algebra by a complete left-symmetric algebra satisfying (\ref%
{eq4}). This allows us to classify all low-dimensional left-symmetric
algebras satisfying (\ref{eq4}). If $A$ is a Novikov algebra with associated
Lie algebra $\mathcal{G}_{A},$ we show that these two radicals coincide and
are equal to the kernel of the homomorphism $L:A\rightarrow \mathfrak{gl}%
\left( \mathcal{G}_{A}\right) ,$ where $L_{x}$ is the left-multiplication
defined by $L_{x}y=x\cdot y.$

Finally, in Section 5, we make some comments on simple left-symmetric
algebras. In \cite{zelmanov}, Zelmanov proved that a finite-dimensional
simple Novikov algebra $A$ over a field $\mathbb{F}$ of characteristic zero
is one-dimensional (i.e., $A$ coincides with $\mathbb{F}$). This is true
when $\mathbb{F}$ is algebraically closed. However, the two-dimensional
commutative algebra $A_{2,\mathbb{R}}$ over $\mathbb{R}$ defined by the
following multiplication table:$\ e_{1}\cdot e_{1}=e_{1},\ \ e_{1}\cdot
e_{2}=e_{2}\cdot e_{1}=e_{2},\ \ e_{2}\cdot e_{2}=-e_{1}$ is a Novikov
algebra which turns out to be simple. In fact, we shall show that this is
the unique simple Novikov algebra over $\mathbb{R}$ of dimension $\geq 2.$
We shall also show that a simple left-symmetric algebra over $\mathbb{R}$
which is derivation or satisfying(\ref{eq4}) is isomorphic to either $A_{2,%
\mathbb{R}}$ or the field $\mathbb{R}.$ We close this section by pointing
out that every complete left-symmetric algebra over a field of
characteristic zero that is Novikov, derivation, or satisfying (\ref{eq4})
is not simple.

\section{Left-symmetric algebras}

Throughout this paper, $\mathbb{F}$ will denote a field of characteristic
zero. As we mentioned above, recall that a finite-dimensional algebra $%
\left( A,\cdot \right) $ over $\mathbb{F}$ is called \emph{left-symmetric}
if it satisfies the identity%
\begin{equation*}
\left( x,y,z\right) =\left( y,x,z\right) ,\ \ \ \text{for all }x,y,z\in A,
\end{equation*}%
where $\left( x,y,z\right) $ denotes the associator $\left( x,y,z\right)
=\left( x\cdot y\right) \cdot z-x\cdot \left( y\cdot z\right) .$ In this
case, the commutator $\left[ x,y\right] =x\cdot y-y\cdot x$ defines a
bracket that makes $A$ into a Lie algebra. We denote by $\mathcal{G}_{A}$
this Lie algebra and call it \emph{the associated Lie algebra to }$A.$
Conversely, if $\mathcal{G}$ is a Lie algebra endowed with a left-symmetric
product satisfying the condition $\left[ x,y\right] =x\cdot y-y\cdot x,$
then we say that this left-symmetric product is \emph{compatible} with the
Lie structure of $\mathcal{G}.$

Let $A$ be a left-symmetric algebra over a field $\mathbb{F},$ and let $%
L_{x} $ and $R_{x}$ denote the left and right multiplications by the element 
$x\in A$, respectively. The identity (\ref{eq1}) is now equivalent to the
formula 
\begin{equation}
\left[ L_{x},L_{y}\right] =L_{\left[ x,y\right] },\ \ \ \text{for all }%
x,y\in A,  \label{eq1a}
\end{equation}%
or, in other words, the linear map $L:\mathcal{G}_{A}\rightarrow End\left(
A\right) $ is a representation of Lie algebras.

We notice that (\ref{eq1}) is also equivalent to the formula 
\begin{equation}
\left[ L_{x},R_{y}\right] =R_{x\cdot y}-R_{y}\circ R_{x},\ \ \ \text{for all 
}x,y\in A.  \label{eq1b}
\end{equation}

We say that $A$ is \emph{complete} if $R_{x}$ is a nilpotent operator, for
all $x\in A.$ In this context, if $G$ is an $n$-dimensional simply connected
Lie group with Lie algebra $\mathcal{G},$ then giving a compatible complete
left-symmetric product on $\mathcal{G}$ can be interpreted as giving a
complete left-invariant affine connection on $G$ or, equivalently, as giving
a simply transitive affine action of $G$ on an $n$-dimensional vector space $%
E.$

We say that $A$ is \emph{a Novikov algebra} if it satisfies the identity%
\begin{equation}
\left( x\cdot y\right) \cdot z=\left( x\cdot z\right) \cdot y,\ \ \ \text{%
for all }x,y,z\in A.  \label{eq2}
\end{equation}

In terms of left and right multiplications, (\ref{eq2}) is equivalent to any
one of the following formulas%
\begin{eqnarray}
\left[ R_{x},R_{y}\right] &=&0,\ \ \ \text{for all }x,y\in A,  \label{eq2a}
\\
\ L_{x\cdot y} &=&R_{y}\circ L_{x},\ \ \ \text{for all }x,y\in A.
\label{eq2b}
\end{eqnarray}

The left-symmetric algebra $A$ is called \emph{a derivation algebra} if it
satisfies the identity 
\begin{equation}
\left( x\cdot y\right) \cdot z=\left( z\cdot y\right) \cdot x,\ \ \ \text{%
for all }x,y,z\in A.  \label{eq3}
\end{equation}%
or, equivalently, all left and right multiplications $L_{x}$ and $R_{x}$ are
derivations of the ssociated Lie algebra $\mathcal{G}_{A}.$ This is also
equivalent to the formula%
\begin{equation}
L_{x\cdot y}=R_{x}\circ R_{y},\ \ \ \text{for all }x,y\in A.  \label{eq3a}
\end{equation}

\subsection{Left-symmetric algebras satisfying $\left[ x,y\right] \cdot z=0$}

As we mentioned before, in this paper we are mainly concerned with locally
simply transitive affine actions of Lie groups\ $G$ on vector spaces such
that the commutator subgroup $\left[ G,G\right] $\ is acting by
translations. In terms of left-symmetric structures, we are concerned with
the class of finite-dimensional left-symmetric algebras satisfying the
identity (\ref{eq4}). If $A$ is such an algebra, then recall that (\ref{eq4}%
) is equivalent to the identity $\left[ x,y\right] \cdot z=0$ for all $%
x,y,z\in A$, which in turn is equivalent to say that the restriction of any
right multiplication $R_{x}$ to the derived ideal $\left[ A,A\right] $ is
identically zero. In light of (\ref{eq1a}), this is also equivalent to the
identity

\begin{equation}
\left[ L_{x},L_{y}\right] =0,\ \ \ \text{for all }x,y\in A.  \label{eq4b}
\end{equation}

The simplest examples of left-symmetric algebras satisfying (\ref{eq4}) are
the commutative algebras. This is stated in the following lemma, whose proof
is straightforward.

\begin{lemma}
A left-symmetric algebra $A$ over a field $\mathbb{F}$ is commutative if and
only if the associated Lie algebra $\mathcal{G}_{A}$ is abelian. In that
case, $A$ is associative, Novikov, derivation, and satisfying (\ref{eq4}).
\end{lemma}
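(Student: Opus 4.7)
The plan is to handle the biconditional by direct computation with the bracket, and then derive the four identities from commutativity together with the left-symmetric axiom.

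For the equivalence, I would just unpack the definition of the associated Lie bracket. If $A$ is commutative, then $[x,y]=x\cdot y-y\cdot x=0$ for all $x,y\in A$, so $\mathcal{G}_{A}$ is abelian. Conversely, if $\mathcal{G}_{A}$ is abelian, then $0=[x,y]=x\cdot y-y\cdot x$ gives $x\cdot y=y\cdot x$. No obstacle here; this is purely formal.

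For the second assertion, assume $A$ is commutative. The identity (\ref{eq4}) is immediate, since $x\cdot y=y\cdot x$ forces $(x\cdot y)\cdot z=(y\cdot x)\cdot z$. To get associativity, I would plug commutativity into the left-symmetric identity $(x,y,z)=(y,x,z)$: since $(x\cdot y)\cdot z=(y\cdot x)\cdot z$, the identity collapses to $x\cdot(y\cdot z)=y\cdot(x\cdot z)$. Applying this with the roles $(x,y,z)\mapsto(z,x,y)$ and using commutativity twice,
\[
(x\cdot y)\cdot z = z\cdot(x\cdot y)=x\cdot(z\cdot y)=x\cdot(y\cdot z),
\]
which is associativity. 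Once $A$ is both commutative and associative, the Novikov identity $(x\cdot y)\cdot z=(x\cdot z)\cdot y$ and the derivation identity $(x\cdot y)\cdot z=(z\cdot y)\cdot x$ both reduce to a one-line manipulation combining commutativity with associativity (e.g.\ $(x\cdot y)\cdot z=x\cdot(y\cdot z)=x\cdot(z\cdot y)=(x\cdot z)\cdot y$, and similarly for the derivation case).

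There is no substantial obstacle in this lemma; the content is really bookkeeping. The only mildly nontrivial point is to notice that commutativity \emph{plus} the left-symmetric axiom already forces associativity, and this is the one step where the axiom $(x,y,z)=(y,x,z)$ is actually used rather than just the commutativity hypothesis.
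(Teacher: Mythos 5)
Your proof is correct, and it fills in exactly the computation the paper omits (the paper states the lemma ``whose proof is straightforward'' and gives no argument). The key step --- that commutativity collapses the left-symmetric identity to $x\cdot(y\cdot z)=y\cdot(x\cdot z)$, which combined with commutativity yields associativity, after which the Novikov, derivation, and (\ref{eq4}) identities are immediate --- is the standard route and checks out.
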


However, an arbitrary left-symmetric algebra which satisfies (\ref{eq4})
need not be Novikov nor derivation at all. In fact, the conditions (\ref{eq2}%
), (\ref{eq3}), and (\ref{eq4}) are pairwise independent, as the following
examples show.

\begin{example}
Over any field $\mathbb{F},$ there is up to isomorphism just one
two-dimensional non-abelian Lie algebra, namely the Lie algebra $\mathcal{G}%
_{2}$ of the affine group of the line. It has a basis $\left\{
e_{1},e_{2}\right\} $ such that $\left[ e_{1},e_{2}\right] =e_{2}.$ Let $%
A_{21}$ be the left-symmetric algebra whose associated Lie algebra is $%
\mathcal{G}_{2}$ and which has multiplication table: $e_{1}\cdot
e_{1}=e_{1}, $ $e_{1}\cdot e_{2}=e_{2}.$ It is easy to check that $A_{21}$
satisfies (\ref{eq4}) but $A_{21}$ is neither Novikov nor derivation. Of
course, $A_{21}$ is not complete. We shall give later an example of a
complete left-symmetric algebra satisfying (\ref{eq4}) which is neither
Novikov nor derivation (see Example \ref{exp4} below).
\end{example}

\begin{example}
Let $\mathcal{G}_{2}$ be as in the example above, and let $A_{22}$ be the
left-symmetric algebra whose associated Lie algebra is $\mathcal{G}_{2}$ and
which has multiplication table: $e_{1}\cdot e_{1}=-e_{1},$ $e_{2}\cdot
e_{1}=-e_{2}.$ It is easy to check that $A_{22}$ is a Novikov algebra which
is not derivation and does not satisfy (\ref{eq4}).
\end{example}

\begin{example}
Let $\mathcal{G}_{3}$ be the real Lie algebra with basis $\left\{
e_{1},e_{2},e_{3}\right\} $ and non-trivial Lie brackets $\left[ e_{1},e_{2}%
\right] =e_{2},$ $\left[ e_{1},e_{3}\right] =e_{3}.$ Let $A_{3,\gamma }$ be
the left-symmetric algebra whose associated Lie algebra is $\mathcal{G}_{3}$
and which has multiplication table: $e_{1}\cdot e_{1}=\alpha e_{2}+\beta
e_{3},$ $e_{1}\cdot e_{2}=e_{2}+\gamma e_{3},$ $e_{2}\cdot e_{1}=\gamma
e_{3},$ where $\alpha ,\beta ,\gamma $ are arbitrary real numbers. It is
easy to check that $A_{3,\gamma }$ is a derivation algebra for any $\gamma $
and that $A_{3,\gamma }$ satisfies (\ref{eq4}) if and only if $\gamma =0.$
It therefore follows from Proposition \ref{prop1} below that $A_{3,\gamma }$
is Novikov if and only if $\gamma =0.$ Hence, assuming $\gamma \neq 0,$ we
obtain a derivation left-symmetric algebra $A_{3,\gamma }$ which is not
Novikov and does not satisfy (\ref{eq4}).
\end{example}

Although identities (\ref{eq2}), (\ref{eq3}), and (\ref{eq4}) are pairwise
independent, we have the following remarkable fact.

\begin{proposition}
\label{prop1}Let $A$ be a left-symmetric algebra over a field $\mathbb{F}.$
If any two of the conditions (\ref{eq2}), (\ref{eq3}), (\ref{eq4}) are
satisfied, then the third condition is satisfied as well.
\end{proposition}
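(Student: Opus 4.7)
The plan is to exploit the symmetry pattern of the three identities. Observe that each identity expresses that the trilinear map $T(x,y,z) := (x\cdot y)\cdot z$ is invariant under a specific transposition of the three arguments: (\ref{eq2}) says $T$ is invariant under the transposition swapping $y$ and $z$, (\ref{eq3}) says it is invariant under the transposition swapping $x$ and $z$, and (\ref{eq4}) says it is invariant under the transposition swapping $x$ and $y$. Since any two of the three transpositions of $S_{3}$ generate the whole symmetric group $S_{3}$, if $T$ is invariant under any two of these transpositions then it is invariant under all of $S_{3}$, and in particular under the remaining transposition.

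Concretely, I would verify each of the three implications by a short three-term chain. For example, to show that (\ref{eq2}) and (\ref{eq4}) together imply (\ref{eq3}), I would write
\begin{equation*}
(x\cdot y)\cdot z \;=\; (y\cdot x)\cdot z \;=\; (y\cdot z)\cdot x \;=\; (z\cdot y)\cdot x,
\end{equation*}
where the outer equalities use (\ref{eq4}) (applied to the pairs $(x,y)$ and $(y,z)$ respectively) and the middle one uses (\ref{eq2}) applied to the triple $(y,x,z)$. The other two cases are entirely analogous: assuming (\ref{eq3}) and (\ref{eq4}) one sandwiches an application of (\ref{eq3}) between two applications of (\ref{eq4}) to derive (\ref{eq2}), and assuming (\ref{eq2}) and (\ref{eq3}) one sandwiches an application of (\ref{eq3}) between two applications of (\ref{eq2}) to derive (\ref{eq4}).

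There is essentially no obstacle here; the only mild care needed is to choose the correct relabelling of variables at each step so that the hypothesis being invoked matches the positions in which the variables appear. The argument uses nothing beyond the three identities themselves — no appeal to the left-symmetry identity, to (\ref{eq1a}), or to (\ref{eq1b}) is required — which is itself a small point worth noting, since it shows the equivalence is purely combinatorial and not specific to the left-symmetric setting.
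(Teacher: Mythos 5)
Your proposal is correct and follows essentially the same route as the paper: the paper proves one of the three implications by exactly the kind of three-term sandwich chain you describe (using (\ref{eq2}), then (\ref{eq3}), then (\ref{eq2}) again to get (\ref{eq4})) and declares the other cases similar. Your additional observation that the three identities are invariance of $T(x,y,z)=(x\cdot y)\cdot z$ under the three transpositions of $S_{3}$, any two of which generate the whole group, is a clean conceptual packaging of the same computation and is valid.
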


\begin{proof}
Assume first that $A$ satisfies (\ref{eq2}) and (\ref{eq3}), that is $A$ is
Novikov and derivation. Using (\ref{eq2}), (\ref{eq3}), and once again (\ref%
{eq2}), it follows that 
\begin{equation*}
\left( x\cdot y\right) \cdot z=\left( x\cdot z\right) \cdot y=\left( y\cdot
z\right) \cdot x=\left( y\cdot x\right) \cdot z,
\end{equation*}%
that is (\ref{eq4}) is satisfied. The other cases can be proved in a similar
way.
\end{proof}

\bigskip

In \cite{giraud-medina}, it was shown that the Lie algebra associated to a
derivation algebra is necessarily solvable. In the case of Novikov algebras,
a similar result can be deduced from \cite{zelmanov}. In the case of
left-symmetric algebras satisfying (\ref{eq4}), we can be more specific.

\begin{proposition}
\label{prop2}Let $A$ be a left-symmetric algebra satisfying (\ref{eq4}).
Then, the associated Lie algebra $\mathcal{G}_{A}$ is 2-step solvable.
\end{proposition}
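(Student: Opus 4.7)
The plan is to unpack the hypothesis $[x,y]\cdot z=0$ as the vanishing of the left-multiplication operator $L_u$ for every $u$ in the derived ideal $[A,A]$, and then observe that this collapses the Lie bracket of any two such elements.

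First I would rewrite the identity (\ref{eq4a}) as $L_{[x,y]}=0$ for all $x,y\in A$. Since $[A,A]$ is, by definition, the linear span of the commutators $[x,y]$, the bilinearity of the product gives $L_u=0$ for every $u\in[A,A]$; equivalently, $u\cdot z=0$ whenever $u\in[A,A]$ and $z\in A$. This is the same remark the paper already makes just before the proposition, rephrased as a statement about left multiplications via (\ref{eq1a}); compare (\ref{eq4b}).

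Next, I would take arbitrary $u,v\in[A,A]$ and compute the Lie bracket in $\mathcal{G}_A$:
\begin{equation*}
[u,v]=u\cdot v-v\cdot u=L_u(v)-L_v(u)=0-0=0.
\end{equation*}
Hence $[[A,A],[A,A]]=0$, which is exactly the definition of $\mathcal{G}_A$ being $2$-step solvable.

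There is no real obstacle in this argument; the result is essentially immediate once the identity is read operationally. The only subtlety worth flagging is a bookkeeping one: the identity $[x,y]\cdot z=0$ forces the left multiplications $L_u$ (not the right multiplications $R_u$) to vanish on $u\in[A,A]$, because the commutator sits on the left of the product. Once this is noted, the two-step solvability falls out of a single line of computation, without needing to invoke either the Novikov identity or the derivation identity.
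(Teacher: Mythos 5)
Your proof is correct and follows essentially the same route as the paper: both read $[x,y]\cdot z=0$ as $L_u=0$ for all $u\in[A,A]$ and then compute $[u,v]=L_u(v)-L_v(u)=0$ to conclude that the derived ideal is abelian. No issues.
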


\begin{proof}
By (\ref{eq4a}), $L_{x}=0$ for all $x\in \mathcal{D}\mathcal{G}_{A}=\left[ 
\mathcal{G}_{A},\mathcal{G}_{A}\right] .$ It follows that $\left[ x,y\right]
=x\cdot y-y\cdot x=L_{x}y-L_{y}x=0$ for all $x,y\in \mathcal{D}\mathcal{G}%
_{A}.$ Therefore, $\mathcal{D}\mathcal{G}_{A}$ is abelian, that is $\mathcal{%
G}_{A}$ is 2-step solvable.
\end{proof}

\smallskip

The following crucial observation is an immediate consequence of (\ref{eq4a}%
).

\begin{proposition}
\label{prop2bis}Let $A$ be a left-symmetric algebra over a field $\mathbb{F}$
satisfying (\ref{eq4}). Then $\left[ A,A\right] $ is a two-sided ideal of $%
A. $
\end{proposition}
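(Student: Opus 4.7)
The plan is to verify separately that $[A,A]$ is a right ideal and a left ideal of $A$ under the left-symmetric product $\cdot$. Recall that here $[A,A]$ denotes the derived subspace of the associated Lie algebra $\mathcal{G}_A$, i.e., the linear span of all basic commutators $[x,y]=x\cdot y - y\cdot x$. Since ideal membership is linear, it suffices to test the ideal conditions on such basic commutators.

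For the right-ideal property, the hypothesis (\ref{eq4a}) states precisely that $[x,y]\cdot z = 0$ for all $x,y,z\in A$. Hence $[A,A]\cdot A = \{0\} \subseteq [A,A]$, and $[A,A]$ is a right ideal trivially.

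For the left-ideal property, I would use the relation between the left-symmetric product and its associated bracket, $a\cdot b - b\cdot a = [a,b]$, applied with $a=z$ and $b=[x,y]$. This yields
\[
z\cdot [x,y] \;=\; [z,[x,y]] \;+\; [x,y]\cdot z.
\]
By (\ref{eq4a}), the second summand vanishes, so $z\cdot [x,y] = [z,[x,y]] \in [A,A]$. Extending by linearity in both variables gives $A\cdot [A,A] \subseteq [A,A]$, which together with the previous step shows that $[A,A]$ is two-sided.

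I do not anticipate a real obstacle: the whole argument is essentially the observation that (\ref{eq4a}) combined with the defining identity of the associated Lie bracket collapses $z\cdot[x,y]$ into a Lie commutator. The only thing to be careful about is to state clearly that $[A,A]$ is understood as a subspace of $A$ via the commutator product, and that the ideal property in question refers to the left-symmetric multiplication $\cdot$ (not the bracket, under which $[A,A]$ is automatically a Lie ideal).
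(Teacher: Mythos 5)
Your proof is correct and matches the paper's intent: the paper states Proposition \ref{prop2bis} without proof, calling it an immediate consequence of (\ref{eq4a}), and your verification (right-ideal property from $[x,y]\cdot z=0$, left-ideal property from $z\cdot w = [z,w] + w\cdot z = [z,w]$ for $w\in[A,A]$, extended by linearity) is exactly the argument being left implicit.
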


\subsection{Associative algebras satisfying $\left[ x,y\right] \cdot z=0$}

Let $\left( A,\cdot \right) $ be an arbitrary algebra over a field $\mathbb{F%
}.$ Define a new product $\circ $ on $A$ as follows: 
\begin{equation*}
x\circ y=-y\cdot x,\ \ \ \text{for all }x,y\in A.
\end{equation*}

It is obvious that $\left( A,\circ \right) $ is an algebra over $\mathbb{F}$%
, and that $\left( A,\cdot \right) $ is left (resp. right)-symmetric if and
only if $\left( A,\circ \right) $ right (resp. left)-symmetric, where right
symmetry means here that $\left( x,y,z\right) =\left( x,z,y\right) $ for all 
$x,y,z\in A.$ Particularly, we have that $\left( A,\cdot \right) $ is
associative if and only if $\left( A,\circ \right) $ is so. In that case, we
have the following:

\begin{proposition}
\label{prop3}Let $\left( A,\cdot \right) $ be an associative algebra over a
field $\mathbb{F},$ and let $\left( A,\circ \right) $ be the associative
algebra over $\mathbb{F}$ defined as above. Then, $\left( A,\cdot \right) $
satisfies (\ref{eq4}) if and only if $\left( A,\circ \right) $ is Novikov.
\end{proposition}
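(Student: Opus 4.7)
The plan is to verify the equivalence by a direct computation that exploits the associativity of $\cdot$ (and hence of $\circ$), which lets me write triple products as unparenthesized words. I would start by unwinding the Novikov condition for $(A,\circ)$, namely $(x\circ y)\circ z=(x\circ z)\circ y$, in terms of $\cdot$.

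Using the definition $u\circ v=-v\cdot u$ twice, the two minus signs in each triple cancel, and associativity of $\cdot$ collapses the expressions to single words: one obtains $(x\circ y)\circ z=z\cdot y\cdot x$ and $(x\circ z)\circ y=y\cdot z\cdot x$. Hence the Novikov identity for $(A,\circ)$ is equivalent to $z\cdot y\cdot x=y\cdot z\cdot x$ for all $x,y,z\in A$, which rearranges to $(z\cdot y-y\cdot z)\cdot x=0$, i.e.\ $[z,y]\cdot x=0$. This is precisely identity (\ref{eq4a}) for $(A,\cdot)$, which is equivalent to (\ref{eq4}).

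Since every step in the above chain is a genuine equivalence (no direction-specific argument is needed), both implications follow simultaneously, completing the proof. There is no real obstacle here: the statement is essentially a bookkeeping exercise, reflecting the fact that $\circ$ is (up to a sign) the opposite multiplication of $\cdot$, so the Novikov identity $(x\circ y)\circ z=(x\circ z)\circ y$ on $(A,\circ)$ translates into a left-factor identity on $(A,\cdot)$, which is exactly the condition $[A,A]\cdot A=0$.
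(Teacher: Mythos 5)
Your proof is correct and follows essentially the same route as the paper: a direct computation using $u\circ v=-v\cdot u$ and associativity to translate the Novikov identity for $\circ$ into the identity $[z,y]\cdot x=0$ for $\cdot$ (the paper just runs the same computation in the opposite direction, starting from $[x,y]\cdot z$). Nothing is missing.
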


\begin{proof}
Let $\left( A,\cdot \right) $ be an associative algebra, and define the
associative product $\circ $ on $A$ as above. Using the associativity of the
product $\circ ,$ we have 
\begin{eqnarray*}
\left[ x,y\right] \cdot z &=&\left( x\cdot y\right) \cdot z-\left( y\cdot
x\right) \cdot z \\
&=&z\circ \left( y\circ x\right) -z\circ \left( x\circ y\right) \\
&=&\left( z\circ y\right) \circ x-\left( z\circ x\right) \circ y,\ \ \ \text{%
for all }x,y,z\in A.
\end{eqnarray*}

It follows from this equality that $\left( A,\cdot \right) $ satisfies (\ref%
{eq4}) if and only if $\left( z\circ y\right) \circ x-\left( z\circ x\right)
\circ y=0\ $for all $x,y,z\in A$, that is $\left( A,\circ \right) $ is
Novikov, as desired.
\end{proof}

\bigskip

As a consequence of Proposition \ref{prop2} and Proposition \ref{prop3}, we
have the following:

\begin{corollary}
Let $\mathcal{G}$ be a Lie algebra which can admit an associative Novikov
structure. Then, $\mathcal{G}$ is $2$-step solvable.
\end{corollary}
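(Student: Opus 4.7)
The plan is to bridge the hypothesis (an associative Novikov product on $\mathcal{G}$) to Proposition \ref{prop2} (which gives $2$-step solvability from condition (\ref{eq4})) by going through the correspondence $\cdot \leftrightarrow \circ$ of Proposition \ref{prop3}.

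First, I would suppose $(A,\circ)$ is an associative Novikov algebra whose commutator bracket $x\circ y - y\circ x$ recovers the Lie bracket of $\mathcal{G}$. Define the opposite-style product $x\cdot y := -y\circ x$ on $A$. A short check (which is precisely the content sitting just above Proposition \ref{prop3}) shows that $(A,\cdot)$ is again associative, hence in particular left-symmetric. Then, applying Proposition \ref{prop3} in the reverse direction, the fact that $(A,\circ)$ is Novikov forces $(A,\cdot)$ to satisfy the identity $[x,y]\cdot z=0$, i.e.\ condition (\ref{eq4}).

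Next I would verify that the construction does not change the underlying Lie algebra. A direct computation gives
\begin{equation*}
x\cdot y - y\cdot x = -y\circ x - (-x\circ y) = x\circ y - y\circ x,
\end{equation*}
so the Lie algebra $\mathcal{G}_{(A,\cdot)}$ associated to $\cdot$ is exactly the Lie algebra $\mathcal{G}_{(A,\circ)}=\mathcal{G}$. Finally, Proposition \ref{prop2} applied to the left-symmetric algebra $(A,\cdot)$, which satisfies (\ref{eq4}), yields that $\mathcal{G}_{(A,\cdot)}=\mathcal{G}$ is $2$-step solvable, as desired.

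There is no real obstacle here: the proof is essentially a one-line transport along the duality of Proposition \ref{prop3}, together with the observation that this duality preserves the commutator bracket. The only delicate point is making sure the sign convention in the definition $x\circ y=-y\cdot x$ is handled correctly so that the associated Lie algebras genuinely agree, but this is a routine verification.
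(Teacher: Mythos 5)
Your proposal is correct and is exactly the argument the paper intends: the corollary is stated as an immediate consequence of Propositions \ref{prop2} and \ref{prop3}, and you supply the routine details (the involutive nature of $x\cdot y=-y\circ x$, the preservation of the commutator bracket, and the fact that associativity implies left-symmetry so Proposition \ref{prop2} applies). No gaps.
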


\section{\protect\bigskip The center of a left-symmetric algebra}

Recall that the center $Z\left( A\right) $ of an arbitrary algebra $A$ (see
for instance \cite{schafer}) is defined to be the set of all $z$ in $A$
which commute and associate with all elements in $A;$ that is, the set of
all $z$ in $A$ with the property $x\cdot z=z\cdot x$ for all $x\in A,$ and
which satisfy the additional identity 
\begin{equation}
\left( z,x,y\right) =\left( x,z,y\right) =\left( x,y,z\right) =0,\ \text{for
all }x,y\in A.  \label{eq5}
\end{equation}

We first begin by showing that if $A$ is a left-symmetric algebra and $%
\mathcal{Z}_{A}$ is the center of the Lie algebra $\mathcal{G}_{A}$
associated to $A,$ then (\ref{eq5}) simplifies.

\begin{lemma}
\label{lemma1}Let $A$ be a left-symmetric algebra. Then 
\begin{equation*}
Z\left( A\right) =\left\{ z\in \mathcal{Z}_{A}:\left( z,x,y\right) =0,~\ \ 
\text{for all }x,y\in A.\right\} .
\end{equation*}
\end{lemma}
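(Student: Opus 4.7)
The plan is to verify that the lemma reduces to checking a single implication: the two conditions on the right-hand side, namely $z \in \mathcal{Z}_A$ and $(z,x,y)=0$ for all $x,y \in A$, imply the remaining two conditions in the definition of $Z(A)$, namely $(x,z,y)=0$ and $(x,y,z)=0$. The inclusion $Z(A) \subseteq \{z \in \mathcal{Z}_A : (z,x,y)=0\}$ is immediate, because the defining commutativity property $x \cdot z = z \cdot x$ is tautologically the same as $[x,z]=0$, i.e., $z \in \mathcal{Z}_A$.

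The vanishing of the middle associator $(x,z,y)$ is essentially free: the defining identity of a left-symmetric algebra $(u,v,w)=(v,u,w)$, applied with $u=x$, $v=z$, $w=y$, gives $(x,z,y) = (z,x,y) = 0$. So the only real work concerns the third associator $(x,y,z)$.

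For $(x,y,z)$, I would exploit the commutativity of $z$ with both $x \cdot y$ and $y$ to rewrite
\[
(x,y,z) = (x \cdot y) \cdot z - x \cdot (y \cdot z) = z \cdot (x \cdot y) - x \cdot (z \cdot y).
\]
The hypothesis $(z,x,y)=0$ then yields $z \cdot (x \cdot y) = (z \cdot x) \cdot y = (x \cdot z) \cdot y$, while the already established $(x,z,y)=0$ yields $x \cdot (z \cdot y) = (x \cdot z) \cdot y$. The two terms are equal, and $(x,y,z) = 0$ as required.

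The computation is short, so the main obstacle is not a technical difficulty but rather the bookkeeping: chaining together left-symmetry, the commutativity of $z$, and the associator hypothesis $(z,x,y)=0$ in the correct order. Conceptually, the content of the lemma is that, in the presence of commutativity with $z$, the left-symmetric identity collapses the three associator conditions in Schafer's general definition of $Z(A)$ to a single one.
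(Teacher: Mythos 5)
Your proof is correct and follows essentially the same route as the paper: both rewrite $(x,y,z)$ as $z\cdot(x\cdot y)-x\cdot(z\cdot y)$ using the centrality of $z$ and then cancel via the left-symmetric identity, with $(x,z,y)=(z,x,y)$ handling the middle associator. The only cosmetic difference is that the paper's computation yields $(x,y,z)=0$ for every $z\in\mathcal{Z}_{A}$ without invoking the hypothesis $(z,x,y)=0$, whereas you route the same cancellation through the two vanishing associators; either way the lemma follows.
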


\begin{proof}
By definition, $Z\left( A\right) \subseteq \mathcal{Z}_{A}.$ Conversely, for
all $z\in \mathcal{Z}_{A}$, we have 
\begin{eqnarray*}
\left( x,y,z\right) &=&\left( x\cdot y\right) \cdot z-x\cdot \left( y\cdot
z\right) \\
&=&z\cdot \left( x\cdot y\right) -x\cdot \left( z\cdot y\right) \\
&=&\left( z\cdot x\right) \cdot y-\left( x\cdot z\right) \cdot y \\
&=&\left( x\cdot z\right) \cdot y-\left( x\cdot z\right) \cdot y \\
&=&0.
\end{eqnarray*}%
The lemma follows now by observing that $\left( z,x,y\right) =\left(
x,z,y\right) $ for all $x,y,z\in A.$
\end{proof}

\bigskip

In case $A$ is a derivation algebra, it turns out that $Z\left( A\right) $
coincides with $\mathcal{Z}_{A}$ (see \cite{medina}, 2.2.7). We show here
that this is also true for a Novikov algebra.

\begin{lemma}
\label{lemma2}If $A$ is a Novikov or derivation algebra, then $Z\left(
A\right) =\mathcal{Z}_{A}.$
\end{lemma}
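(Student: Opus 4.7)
The plan is to use Lemma~\ref{lemma1} to reduce the statement to a purely associator-free computation. That lemma already supplies the inclusion $Z(A)\subseteq \mathcal{Z}_A$ and tells us that the reverse inclusion will follow once we verify, for each $z\in\mathcal{Z}_A$ and all $x,y\in A$, the single associator identity $(z,x,y)=0$. So the whole lemma reduces to checking this identity separately under the Novikov hypothesis and under the derivation hypothesis.

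For the Novikov case — which is the genuinely new content here, since the derivation case is attributed to \cite{medina} in the paragraph preceding the statement — the computation should be essentially a one-liner. I would expand
\[
(z,x,y) \;=\; (z\cdot x)\cdot y \;-\; z\cdot (x\cdot y),
\]
then use $z\in\mathcal{Z}_A$, which gives $z\cdot x = x\cdot z$ and $z\cdot(x\cdot y)=(x\cdot y)\cdot z$, to rewrite this as $(x\cdot z)\cdot y - (x\cdot y)\cdot z$. The Novikov identity \eqref{eq2} applied to the triple $(x,z,y)$ converts $(x\cdot z)\cdot y$ into $(x\cdot y)\cdot z$, so the two terms cancel and $(z,x,y)=0$.

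For the derivation case I would either quote \cite{medina}, 2.2.7, directly, or, for self-containment, carry out the analogous manipulation. Here the key auxiliary observation is that for $z\in \mathcal{Z}_A$ one has $[x,y]\cdot z = 0$ for all $x,y\in A$: expanding $[x,y]\cdot z = (x\cdot y)\cdot z - (y\cdot x)\cdot z$ and twice applying the derivation identity \eqref{eq3} (once as $(x\cdot y)\cdot z = (z\cdot y)\cdot x$ and once via the equivalent formula \eqref{eq3a} $L_{y\cdot z}=R_y\circ R_z$), together with $z\cdot x=x\cdot z$ and $z\cdot y=y\cdot z$, should collapse the two summands to a common value. Then the same expansion as in the Novikov case, combined with $L_z L_x = L_x L_z$ (which follows from \eqref{eq1a} since $z$ is Lie-central), reduces $(z,x,y)$ to $-[x,y]\cdot z$, which vanishes by what we just proved.

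The step I expect to be the main obstacle is the derivation reduction: unlike Novikov, the cancellation is not a single application of the defining identity but requires combining \eqref{eq3}/\eqref{eq3a} with the Lie-centrality of $z$ in two different ways. Once that is in hand, both implications feed into the same conclusion via Lemma~\ref{lemma1}.
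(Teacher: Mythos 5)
Your proposal is correct and takes essentially the same approach as the paper: both reduce via Lemma~\ref{lemma1} to verifying $(z,x,y)=0$ for $z\in\mathcal{Z}_A$, and in the Novikov case your chain of equalities (centrality to move $z$, then the Novikov identity applied to the triple $(x,z,y)$) is exactly the paper's computation. For the derivation case the paper simply defers to \cite{medina}, as you also allow; your optional self-contained argument --- first establishing $[x,y]\cdot z=0$ by two applications of (\ref{eq3}) together with centrality, then reducing $(z,x,y)$ to $-[x,y]\cdot z$ --- does check out, but it is an addition rather than a divergence.
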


\begin{proof}
Assume that $A$ is a Novikov algebra. Then, according to Lemma \ref{lemma1},
we only have to show that $\left( z,x,y\right) =0$ for all $z\in \mathcal{Z}%
_{A}$ and $x,y\in A.$ To do this, we use identity (\ref{eq2}) to obtain 
\begin{eqnarray*}
\left( z,x,y\right) &=&\left( z\cdot x\right) \cdot y-z\cdot \left( x\cdot
y\right) \\
&=&\left( z\cdot x\right) \cdot y-\left( x\cdot y\right) \cdot z \\
&=&\left( z\cdot x\right) \cdot y-\left( x\cdot z\right) \cdot y \\
&=&\left( z\cdot x\right) \cdot y-\left( z\cdot x\right) \cdot y \\
&=&0.
\end{eqnarray*}
\end{proof}

\bigskip

If $A$ is a left-symmetric algebra satisfying (\ref{eq4}), then $Z\left(
A\right) $ can be different from $\mathcal{Z}_{A}.$ Here is an example.

\begin{example}
\label{exp4}On the Lie algebra $\mathcal{H}_{3}\times \mathbb{R}$ with basis 
$\left\{ e_{1},e_{2},e_{3},e_{4}\right\} $ such that $\left[ e_{2},e_{3}%
\right] =e_{1},$ define a left-symmetric product as follows: $e_{2}\cdot
e_{3}=e_{3}\cdot e_{4}=e_{4}\cdot e_{3}=e_{1},$\ $e_{4}\cdot e_{4}=e_{2}.$
The resulting left-symmetric algebra that we denote by $A$ satisfies (\ref%
{eq4}), since $\left[ A,A\right] =\mathbb{R}e_{1}$ and $L_{e_{1}}=0.$
However, it is easy to verify that $A$ is neither Novikov nor derivation.
Now, we notice that $\mathcal{Z}_{A}=span\left\{ e_{1},e_{4}\right\} ;$ and
to show that $Z\left( A\right) \neq \mathcal{Z}_{A}$ we need to check that $%
e_{4}\notin Z\left( A\right) .$ To this end, it suffices to show that $%
\left( e_{4},x,y\right) \neq 0$ for some $x,y\in A.$ Indeed, as desired, we
have 
\begin{eqnarray*}
\left( e_{4},e_{4},e_{3}\right) &=&\left( e_{4}\cdot e_{4}\right) \cdot
e_{3}-e_{4}\cdot \left( e_{4}\cdot e_{3}\right) \\
&=&e_{2}\cdot e_{3}-e_{4}\cdot e_{1} \\
&=&e_{1}.
\end{eqnarray*}
\end{example}

\begin{remark}
\label{remark2}It is remarkable that (\ref{eq4a}) (or equivalently (\ref{eq4}%
) ) is partially true for Novikov algebras and derivation algebras in the
sense that, in both these cases, $R_{z}$ is identically zero on the derived
ideal $\left[ A,A\right] ,$ for all $z$ in the center $Z\left( A\right) $ of 
$A.$
\end{remark}

It is clear that $Z\left( A\right) $ is always a commutative associative
subalgebra of $A.$ However, even if $A$ is a left-symmetric algebra, $%
Z\left( A\right) $ need not be a left (resp. right or two-sided) ideal of $%
A. $ An example is the following:

\begin{example}
Let $A_{4}$ be the four-dimensional real left-symmetric algebra given by the
following multiplication table:%
\begin{equation*}
e_{1}\cdot e_{4}=e_{4}\cdot e_{1}=e_{2},\ \ e_{2}\cdot e_{3}=e_{1},\ \
e_{2}\cdot e_{4}=-e_{3},\ \ e_{3}\cdot e_{3}=e_{2}.
\end{equation*}

It is clear that $\mathcal{Z}_{A_{4}}=\mathbb{R}e_{1}$ is neither a left
ideal nor a right ideal of $A_{4},$ and that $Z\left( A_{4}\right) =\mathcal{%
Z}_{A_{4}}.$
\end{example}

\begin{remark}
It is easy to check that the left-symmetric algebra $A_{4}$ of the above
example does not satisfy (\ref{eq4}) and is neither Novikov nor derivation.
\end{remark}

Now, we specialize to the case when $A$ is a Novikov algebra or a derivation
algebra. In that case, we have the following result.

\begin{proposition}
If $A$ is a Novikov algebra or a derivation algebra, then $Z\left( A\right) =%
\mathcal{Z}_{A}$ is a two-sided ideal of $A.$
\end{proposition}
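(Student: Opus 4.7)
The plan is to reduce the statement to showing $x \cdot z \in \mathcal{Z}_A$ for every $x \in A$ and every $z \in \mathcal{Z}_A$, and then to verify this by unwinding the bracket $[x \cdot z, y]$. By Lemma~\ref{lemma2} we already know $Z(A) = \mathcal{Z}_A$ in both cases, and the centrality of $z$ gives $z \cdot x = x \cdot z$, so absorption on one side automatically yields absorption on the other. Thus the whole problem reduces to proving $[x \cdot z, y] = 0$ for all $x, y \in A$ and $z \in \mathcal{Z}_A$.

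The first step is to verify the assertion announced in Remark~\ref{remark2}: for $z \in \mathcal{Z}_A$ one has $[a, b] \cdot z = 0$ in both the Novikov and the derivation case. In the Novikov case this follows from two applications of (\ref{eq2}) together with the centrality of $z$: one writes $(a \cdot b) \cdot z = (a \cdot z) \cdot b = (z \cdot a) \cdot b$ and symmetrically $(b \cdot a) \cdot z = (z \cdot b) \cdot a$, and a final use of (\ref{eq2}) on $(z \cdot a) \cdot b = (z \cdot b) \cdot a$ collapses the difference to zero. The derivation case is parallel, starting from $(a \cdot b) \cdot z = (z \cdot b) \cdot a$ via (\ref{eq3}) and using centrality to interchange $z$ with $a$ and $b$.

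Next, I would expand $y \cdot (x \cdot z)$ via the left-symmetric identity $(y, x, z) = (x, y, z)$ to obtain $y \cdot (x \cdot z) = [y, x] \cdot z + x \cdot (y \cdot z)$. The first summand vanishes by the previous step, and the calculation in the proof of Lemma~\ref{lemma1} shows $x \cdot (y \cdot z) = (x \cdot y) \cdot z$ whenever $z \in \mathcal{Z}_A$. Consequently $[x \cdot z, y] = (x \cdot z) \cdot y - (x \cdot y) \cdot z$, and it only remains to show this quantity vanishes. In the Novikov case this is immediate from (\ref{eq2}). In the derivation case, (\ref{eq3}) yields $(x \cdot z) \cdot y = (y \cdot z) \cdot x$, and using $y \cdot z = z \cdot y$ followed by one more application of (\ref{eq3}) we get $(z \cdot y) \cdot x = (x \cdot y) \cdot z$, closing the argument.

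The only nontrivial ingredient is the vanishing of $R_z$ on $[A, A]$ for $z \in \mathcal{Z}_A$; once that is in hand, the rest is formal manipulation of the left-symmetric identity combined with the defining identity of the ambient class. I expect the main care is simply keeping track of which of (\ref{eq2}) or (\ref{eq3}) is applied at each step, and there should be no genuine obstacle beyond that bookkeeping.
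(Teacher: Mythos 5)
Your proof is correct and follows essentially the same route as the paper's: both reduce the claim to showing $[x\cdot z,y]=0$ for $z\in\mathcal{Z}_A$, both exploit the left-symmetric identity $[x,y]\cdot z=x\cdot(y\cdot z)-y\cdot(x\cdot z)$ together with the vanishing of $R_z$ on $[A,A]$ (Remark~\ref{remark2}) to commute the left multiplications on the center, and both finish with a short case-by-case manipulation using (\ref{eq2}) or (\ref{eq3}). The only (cosmetic) differences are that you supply an explicit verification of Remark~\ref{remark2}, which the paper states without proof, and that your final step passes through $(x\cdot y)\cdot z$ via the associator computation of Lemma~\ref{lemma1} rather than through the paper's intermediate identity $(x\cdot z)\cdot y=x\cdot(z\cdot y)$.
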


\begin{proof}
For an arbitrary left-symmetric algebra $A,$ identity (\ref{eq1}) is clearly
equivalent to the identity 
\begin{equation*}
\left[ x,y\right] \cdot z=x\cdot \left( y\cdot z\right) -y\cdot \left(
x\cdot z\right) ,\ \ \ x,y,z\in A.
\end{equation*}

Now, by assuming that $A$ is a Novikov algebra or a derivation algebra and
using Lemma \ref{lemma2} and Lemma \ref{remark2}, we deduce that $Z\left(
A\right) =\mathcal{Z}_{A}$ and $\left[ x,y\right] \cdot z=0$ for all $x,y\in
A$ and $z\in Z\left( A\right) .$ Thus, the above identity reduces to the
identity%
\begin{equation}
x\cdot \left( y\cdot z\right) =y\cdot \left( x\cdot z\right) ,\ \ x,y\in A%
\text{\ and }z\in Z\left( A\right) .\   \label{eq6}
\end{equation}

In case $A$ is Novikov, using (\ref{eq6}) we get%
\begin{eqnarray*}
\left( x\cdot z\right) \cdot y-x\cdot \left( z\cdot y\right) &=&\left(
z\cdot x\right) \cdot y-z\cdot \left( x\cdot y\right) \\
&=&\left( z\cdot x\right) \cdot y-\left( x\cdot y\right) \cdot z \\
&=&\left( z\cdot x\right) \cdot y-\left( x\cdot z\right) \cdot y \\
&=&0,
\end{eqnarray*}%
for all $x,y\in A\ $and $z\in Z\left( A\right) .$

In case $A$ is derivation, using (\ref{eq6}) again we get%
\begin{eqnarray*}
\left( x\cdot z\right) \cdot y-x\cdot \left( z\cdot y\right) &=&\left(
z\cdot x\right) \cdot y-z\cdot \left( x\cdot y\right) \\
&=&\left( z\cdot x\right) \cdot y-\left( x\cdot y\right) \cdot z \\
&=&\left( y\cdot x\right) \cdot z-\left( x\cdot y\right) \cdot z \\
&=&\left[ y,x\right] \cdot z \\
&=&0,
\end{eqnarray*}%
for all $x,y\in A\ $and $z\in Z\left( A\right) .$

In summary, we have shown that if $A$ is a Novikov algebra or a derivation
algebra, then 
\begin{equation}
\left( x\cdot z\right) \cdot y=x\cdot \left( z\cdot y\right) ,\ \ x,y\in A%
\text{\ and }z\in Z\left( A\right) .  \label{eq7}
\end{equation}

Assume now that $A$ is a Novikov algebra or a derivation algebra, and let $%
x,y\in A$\ and $z\in Z\left( A\right) .$ Then, by combining (\ref{eq6}) with
(\ref{eq7}), we obtain 
\begin{eqnarray*}
\left( x\cdot z\right) \cdot y &=&x\cdot \left( z\cdot y\right) \\
&=&x\cdot \left( y\cdot z\right) \\
&=&y\cdot \left( x\cdot z\right) ,
\end{eqnarray*}%
which is equivalent to the identity $\left[ x\cdot z,y\right] =0,$ from
which we conclude that $x\cdot z\in \mathcal{Z}_{A}=Z\left( A\right) .$ That
is, $Z\left( A\right) =\mathcal{Z}_{A}$ is a left ideal of $A$.

Since $z\cdot x=x\cdot z,$ we conclude that $Z\left( A\right) =\mathcal{Z}%
_{A}$ is also a right ideal of $A$. Hence $Z\left( A\right) =\mathcal{Z}_{A}$
is a two-sided ideal of $A,$ as desired.
\end{proof}

\bigskip

If $A$ is a left-symmetric algebra satisfying (\ref{eq4}) which is neither
Novikov nor derivation, then $Z\left( A\right) $ need not be a two-sided
ideal of $A.$ Here is an example.

\begin{example}
On the Lie algebra $\mathcal{H}_{3}\times \mathbb{R}$ with basis $\left\{
e_{1},e_{2},e_{3},e_{4}\right\} $ such that $\left[ e_{2},e_{3}\right]
=e_{1},$ define a left-symmetric product as follows: $e_{2}\cdot
e_{3}=e_{1},\ e_{3}\cdot e_{2}=2e_{1},\ e_{3}\cdot e_{3}=e_{4},$\ $%
e_{3}\cdot e_{4}=e_{4}\cdot e_{3}=e_{2},\ e_{4}\cdot e_{4}=2e_{1}.$ The
resulting left-symmetric algebra that we denote by $A$ satisfies (\ref{eq4}%
), since $\left[ A,A\right] =\mathbb{R}e_{1}$ and $L_{e_{1}}=0.$ However, it
is easy to check that $A$ is neither Novikov nor derivation. It is also easy
to verify that $Z\left( A\right) =\mathcal{Z}_{A}=span\left\{
e_{1},e_{4}\right\} ,$ and that $Z\left( A\right) $ is neither a left ideal
nor a right ideal of $A$
\end{example}

\subsection{The translational center of a left-symmetric algebra}

Given a left-symmetric algebra $A$ over a field $\mathbb{F},$ we consider
the set 
\begin{equation*}
T\left( A\right) =\left\{ x\in A:x\cdot y=0,\ \ \ \text{for all }y\in
A\right\} .
\end{equation*}

Since $T\left( A\right) $ is both a right ideal of $A$ and a Lie ideal of
the associated Lie algebra $\mathcal{G}_{A},$ it follows that $T\left(
A\right) $ is a two-sided ideal of $A.$

\begin{definition}
The subset $C\left( A\right) $ of $A$ given by%
\begin{equation*}
C\left( A\right) =T\left( A\right) \cap Z\left( A\right)
\end{equation*}%
will be called the \emph{translational center} of $A.$
\end{definition}

We note that, in \cite{kim}, the center of $A$ is defined to be the subset $%
T\left( A\right) \cap \mathcal{Z}_{A},$ but it follows from Lemma \ref%
{lemma1} that this coincides with $C\left( A\right) .$ On the other hand,
the translational center has the following geometric interpretation:

We know that a transitive (resp. simply transitive) affine action of a Lie
group $G$ on $\mathbb{R}^{n}$ corresponds to the existence of a
left-symmetric (resp. complete left-symmetric) product on the Lie algebra $%
\mathcal{G}$ of $G$ (see \cite{kim}), and it turns out that the exponential
of $C\left( A\right) $ corresponds exactly to central translations in $G.$
For instance, we know by \cite{fried-goldman} that if $G$ is a $3$%
-dimensional nilpotent Lie group acting simply transitively on $\mathbb{R}%
^{3}$ by affine transformations, then $G$ contains a non-trivial central
translation. This answers affirmatively an old conjecture of Auslander in
dimension 3. However, an example of a simply transitive affine action of a
unipotent Lie group on $\mathbb{R}^{3}$ which contains no central
translation was constructed in \cite{fried}.

\begin{definition}
Let $\mathcal{G}$ be a Lie algebra with center $\mathcal{Z}$ $\left( 
\mathcal{G}\right) $, and define the lower central series of $\mathcal{G}$
to be the series of ideals $\left\{ \mathcal{C}^{i}\mathcal{G}\right\} $
given by $\mathcal{C}^{0}\mathcal{G=G},$ $\mathcal{C}^{1}\mathcal{G=}\left[ 
\mathcal{G},\mathcal{G}\right] ,$ and $\mathcal{C}^{i}\mathcal{G=}\left[ 
\mathcal{G},\mathcal{C}^{i-1}\mathcal{G}\right] $ for all $i\geq 1.$ If
there exists some integer $k\geq 1$ such that $\mathcal{C}^{k}\mathcal{G}%
\varsupsetneq \mathcal{\mathcal{C}}^{k+1}\mathcal{\mathcal{G}}=\left\{
0\right\} $, then we say that $\mathcal{G}$ is $k$-step nilpotent. In that
case, if in addition $\mathcal{Z}$ $\left( \mathcal{G}\right) =\mathcal{C}%
^{k}\mathcal{G},$ then we say that $\mathcal{G}$ is \emph{nonsingular}.
\end{definition}

\begin{proposition}
\label{prop3bis}Let $A$ be a left-symmetric algebra whose associated Lie
algebra $\mathcal{G}_{A}$ is $k$-step nilpotent. If $A$ satisfies (\ref{eq4}%
), then $\mathcal{C}^{k-1}\mathcal{G}_{A}\subseteq C\left( A\right) .$ In
particular $C\left( A\right) \neq \left\{ 0\right\} .$

If in addition $\mathcal{G}_{A}$ is nonsingular, then $C\left( A\right)
=Z\left( A\right) =\mathcal{Z}_{A}.$
\end{proposition}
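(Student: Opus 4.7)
The plan is to locate a canonical non-trivial subspace of $A$ that sits inside $C(A)$. The natural candidate is the last non-vanishing term of the lower central series, and the argument reduces to checking that identity (\ref{eq4a}) together with Lie-centrality forces every element of this subspace to be both a two-sided annihilator and to satisfy the associator condition of Lemma \ref{lemma1}. (A small caveat: as literally written, the conclusion reads $\mathcal{C}^{k-1}\mathcal{G}_A\subseteq C(A)$, but since $[\mathcal{G}_A,\mathcal{C}^{k-1}\mathcal{G}_A]=\mathcal{C}^{k}\mathcal{G}_A\neq\{0\}$ by the definition of $k$-step nilpotency, $\mathcal{C}^{k-1}\mathcal{G}_A$ cannot sit inside $\mathcal{Z}_A$ in general; I would therefore prove the corresponding statement for $\mathcal{C}^{k}\mathcal{G}_A$, which is what suffices for both conclusions of the proposition.)

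First I would record two elementary inclusions: $\mathcal{C}^k\mathcal{G}_A\subseteq [A,A]$ (immediate since $k\geq 1$) and $\mathcal{C}^k\mathcal{G}_A\subseteq\mathcal{Z}_A$ (because $[\mathcal{G}_A,\mathcal{C}^k\mathcal{G}_A]=\mathcal{C}^{k+1}\mathcal{G}_A=\{0\}$). Then, for $z\in\mathcal{C}^k\mathcal{G}_A$, identity (\ref{eq4a}) applied to $z\in[A,A]$ gives $L_z=0$, so $z\cdot y=0$ for every $y\in A$. Combining with $[z,y]=0$ yields $y\cdot z=z\cdot y+[y,z]=0$, so $z\in T(A)$. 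The associator condition of Lemma \ref{lemma1} is then automatic since
\begin{equation*}
(z,x,y)=(z\cdot x)\cdot y - z\cdot (x\cdot y) = 0 - 0 = 0,
\end{equation*}
placing $z$ in $Z(A)$ as well. Thus $\mathcal{C}^k\mathcal{G}_A\subseteq T(A)\cap Z(A)=C(A)$, and non-triviality of $C(A)$ follows from $\mathcal{C}^k\mathcal{G}_A\neq\{0\}$.

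For the nonsingular case, the hypothesis $\mathcal{Z}_A=\mathcal{C}^k\mathcal{G}_A$ combined with the chain $\mathcal{C}^k\mathcal{G}_A\subseteq C(A)\subseteq Z(A)\subseteq\mathcal{Z}_A$ (the last inclusion being part of Lemma \ref{lemma1}) forces equality throughout, yielding $C(A)=Z(A)=\mathcal{Z}_A$. I do not anticipate any real obstacle: the whole argument rests on the two bare inclusions $\mathcal{C}^k\mathcal{G}_A\subseteq [A,A]\cap\mathcal{Z}_A$ together with the left-annihilator property $L_{[A,A]}=0$ supplied by (\ref{eq4a}).
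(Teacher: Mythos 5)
Your argument is correct and is essentially the paper's own: both show that the last nonvanishing term of the lower central series lies in $\left[ A,A\right] \cap \mathcal{Z}_{A}$, use (\ref{eq4a}) to get $\left[ A,A\right] \subseteq T\left( A\right) $ and Lemma \ref{lemma1} to pass from $T\left( A\right) \cap \mathcal{Z}_{A}$ to $C\left( A\right) $, and then sandwich $\mathcal{Z}_{A}\subseteq C\left( A\right) \subseteq Z\left( A\right) \subseteq \mathcal{Z}_{A}$ in the nonsingular case. Your indexing caveat is well spotted but is an off-by-one in the paper's definition rather than in the proposition: the proposition and its proof (and the later discussion of nonsingular derivation algebras) clearly intend $k$-step nilpotent to mean $\mathcal{C}^{k-1}\mathcal{G}\varsupsetneq \mathcal{C}^{k}\mathcal{G}=\left\{ 0\right\} $, under which your statement for $\mathcal{C}^{k}\mathcal{G}_{A}$ and the stated one for $\mathcal{C}^{k-1}\mathcal{G}_{A}$ are the same assertion about the last nonvanishing term.
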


\begin{proof}
By (\ref{eq4a}), we have $\left[ \mathcal{G}_{A},\mathcal{G}_{A}\right] =%
\left[ A,A\right] \subseteq T\left( A\right) ;$ and consequently%
\begin{equation*}
\left\{ 0\right\} \neq \mathcal{C}^{k-1}\mathcal{G}_{A}=\left[ \mathcal{G}%
_{A},\mathcal{G}_{A}\right] \cap \mathcal{Z}_{A}\subseteq T\left( A\right)
\cap \mathcal{Z}_{A}=C\left( A\right) .
\end{equation*}

Assume now that $\mathcal{G}_{A}$ is nonsingular. On the one hand, since $%
\mathcal{G}_{A}$ is nonsingular and $A$ satisfies (\ref{eq4}) we have $%
\mathcal{Z}_{A}=\mathcal{C}^{k-1}\mathcal{G}_{A}\subseteq \left[ \mathcal{G}%
_{A},\mathcal{G}_{A}\right] \subseteq T\left( A\right) ,$ from which we
deduce that $C\left( A\right) =T\left( A\right) \cap \mathcal{Z}_{A}=%
\mathcal{Z}_{A}.$ On the other hand, we have $C\left( A\right) \subseteq
Z\left( A\right) \subseteq \mathcal{Z}_{A}$.
\end{proof}

\bigskip

As an immediate but useful consequence of Proposition \ref{prop3bis}, we
have the following corollary which gives a method for classifying
left-symmetric products satisfying (\ref{eq4}) on low dimensional nilpotent
Lie algebras.

\begin{corollary}
Let $A$ be a left-symmetric algebra satisfying (\ref{eq4}) whose associated
Lie algebra is nilpotent. Then, $A$ can be obtained as a central extension
of a left-symmetric algebra satisfying (\ref{eq4}) of smaller dimension.
\end{corollary}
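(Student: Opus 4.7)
The plan is to use Proposition \ref{prop3bis} as the engine and then check that the translational center provides a one-dimensional central two-sided ideal by which we can quotient while preserving identity (\ref{eq4}).

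First, since $\mathcal{G}_A$ is nilpotent (say $k$-step nilpotent), Proposition \ref{prop3bis} gives $\mathcal{C}^{k-1}\mathcal{G}_A \subseteq C(A)$, and in particular $C(A) \neq \{0\}$. Choose any one-dimensional subspace $I \subseteq C(A)$ (e.g.\ the line spanned by any nonzero element of $C(A)$). Because $C(A) = T(A) \cap Z(A)$, every $z \in I$ satisfies $z \cdot y = 0$ for all $y \in A$ (from $z \in T(A)$), and $y \cdot z = z \cdot y = 0$ as well (using $z \in Z(A) \subseteq \mathcal{Z}_A$). Hence $I \cdot A = A \cdot I = 0$, so $I$ is a two-sided ideal of $A$ and in particular an ideal of $\mathcal{G}_A$.

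Second, form the quotient left-symmetric algebra $\bar{A} = A/I$. The product is well defined since $I$ is a two-sided ideal, and $\dim \bar{A} = \dim A - 1 < \dim A$. The identity (\ref{eq4}) passes to the quotient: for $\bar{x},\bar{y},\bar{z} \in \bar{A}$,
\begin{equation*}
[\bar{x},\bar{y}] \cdot \bar{z} = \overline{[x,y] \cdot z} = \bar{0},
\end{equation*}
since $[x,y] \cdot z = 0$ already in $A$. Thus $\bar{A}$ is a left-symmetric algebra of strictly smaller dimension satisfying (\ref{eq4}).

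Third, the short exact sequence $0 \to I \to A \to \bar{A} \to 0$ is a central extension of left-symmetric algebras in the usual sense: $I$ lies in the center $Z(A)$, is a two-sided ideal, and acts trivially on both sides (so $I$ is in particular an abelian left-symmetric ideal). This realizes $A$ as a central extension of $\bar{A}$ by $I$, as claimed.

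The argument is essentially a direct consequence of Proposition \ref{prop3bis}; the only real content beyond invoking that proposition is observing that membership in $T(A) \cap Z(A)$ forces both $I \cdot A = 0$ and $A \cdot I = 0$, which makes $I$ automatically a central ideal and makes the quotient well-defined. The mild point to be careful about is that one needs $I$ to be a genuine two-sided ideal of the \emph{left-symmetric} structure (not only a Lie ideal), but this is immediate from $C(A) \subseteq T(A)$ combined with the symmetry $y \cdot z = z \cdot y$ for $z \in Z(A)$, so there is no real obstacle.
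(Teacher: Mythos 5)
Your proposal is correct and follows exactly the route the paper intends: the paper states this corollary without proof as an ``immediate consequence'' of Proposition \ref{prop3bis}, and your argument simply fills in the details — nonvanishing of $C(A)=T(A)\cap Z(A)$ gives a subspace $I$ with $I\cdot A=A\cdot I=0$, hence a two-sided ideal by which one can quotient while the polynomial identity (\ref{eq4}) passes to $A/I$. Nothing is missing.
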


\subsection{Some comments on derivation algebras}

In this subsection we make some comments on other existing results
concerning derivation algebras. The first comment concerns a special family
of derivation algebras, the so-called inner derivation algebras. In \cite%
{medina}, a left-symmetric algebra $A$ over a field $\mathbb{F}$ is called
an \emph{inner derivation algebra} if all left (resp. right) multiplications
are inner derivations of the associated Lie algebra $\mathcal{G}_{A}.$ If $A$
is an inner derivation algebra, then it is easy to verify that the
left-symmetric product is given by 
\begin{equation*}
x\cdot y=ad_{f\left( x\right) }y=\left[ f\left( x\right) ,y\right] ,\ \ \ 
\text{for all }x,y\in A,
\end{equation*}%
where $f$ is an endomorphism of the vector space $A$ satisfying the
conditions:

\begin{enumerate}
\item $\left[ x,y\right] =\left[ f\left( x\right) ,y\right] +\left[
x,f\left( y\right) \right] ,$

\item $f\left( \left[ x,y\right] \right) -\left[ f\left( x\right) ,f\left(
y\right) \right] \in \mathcal{Z}_{A},\ \ $for all $x,y\in A.$
\end{enumerate}

\begin{proposition}
Let $A$ be an inner derivation algebra whose associated Lie algebra $%
\mathcal{G}_{A}$ is $2$-step nilpotent. Then, $A$ satisfies (\ref{eq4}).
\end{proposition}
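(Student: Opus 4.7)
The plan is to compute $[x,y]\cdot z$ directly using the explicit formula for the left-symmetric product on an inner derivation algebra, and then exploit the $2$-step nilpotency of $\mathcal{G}_A$ to kill the resulting bracket.

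First I would unpack the hypothesis. Because $A$ is inner derivation, there is an endomorphism $f$ of the vector space $A$ such that $x\cdot y = [f(x),y]$ for all $x,y\in A$, and $f$ satisfies the two listed compatibility conditions with the Lie bracket. I would apply this formula with $x$ replaced by $[x,y]$ to get
\begin{equation*}
[x,y]\cdot z \;=\; [\,f([x,y]),\,z\,].
\end{equation*}

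Next, I would invoke condition (2) on $f$, which says $f([x,y]) - [f(x),f(y)] \in \mathcal{Z}_A$. Writing $f([x,y]) = [f(x),f(y)] + w$ with $w\in\mathcal{Z}_A$ and substituting gives
\begin{equation*}
[x,y]\cdot z \;=\; \bigl[[f(x),f(y)],\,z\bigr] + [w,z] \;=\; \bigl[[f(x),f(y)],\,z\bigr],
\end{equation*}
since $w$ is central in $\mathcal{G}_A$.

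Finally, the $2$-step nilpotency of $\mathcal{G}_A$ means exactly that $[\mathcal{G}_A,\mathcal{G}_A]\subseteq \mathcal{Z}_A$, hence $[f(x),f(y)]\in\mathcal{Z}_A$, and therefore $\bigl[[f(x),f(y)],z\bigr]=0$. Combining these steps yields $[x,y]\cdot z=0$ for all $x,y,z\in A$, which is exactly identity (\ref{eq4a}). I do not anticipate a serious obstacle here: the only nontrivial ingredient is recognizing that condition (2) on $f$ is precisely what is needed to convert $f([x,y])$ into a bracket modulo the center, so that $2$-step nilpotency of $\mathcal{G}_A$ can finish the argument.
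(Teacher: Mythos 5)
Your proof is correct and follows essentially the same route as the paper: write $[x,y]\cdot z=[f([x,y]),z]$, use condition (2) to replace $f([x,y])$ by $[f(x),f(y)]$ modulo the center, and kill the result via $[A,A]\subseteq\mathcal{Z}_A$. The only difference is cosmetic — you spell out the two vanishing terms separately, whereas the paper combines them in one line (with a harmless typo writing $[f([x,y]),y]$ for $[f([x,y]),z]$).
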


\begin{proof}
Let $f$ be an endomorphism of the vector space $A$ such that $%
L_{x}=ad_{f\left( x\right) },$ for all $x\in A;$ and let $x,y,z\in A.$ From
the above second condition on $f,$ we have that $f\left( \left[ x,y\right]
\right) =\left[ f\left( x\right) ,f\left( y\right) \right] +z^{\prime },$
with $z^{\prime }\in \mathcal{Z}_{A}.$ It follows that%
\begin{eqnarray*}
\left[ x,y\right] \cdot z &=&\left[ f\left( \left[ x,y\right] \right) ,y%
\right] \\
&=&\left[ \left[ f\left( x\right) ,f\left( y\right) \right] +z^{\prime },z%
\right] \\
&=&0,
\end{eqnarray*}%
given that $\mathcal{G}_{A}$ is $2$-step nilpotent (i.e., $\left[ A,A\right]
\subseteq \mathcal{Z}_{A}$). Thus $A$ satisfies (\ref{eq4}), as desired.
\end{proof}

\bigskip

The second and third comments concern derivation algebras that are not
necessarily inner. These can be derived from the following result proved in 
\cite{medina}.

\begin{theorem}
\label{medina thm}A derivation algebra $A$ over a field $\mathbb{F}$ splits
uniquely as a direct sum of two-sided ideals $A_{0}$ and $A_{\ast }$ such
that $A_{0}$ is complete and contains the derived ideal $\left[ A,A\right] $
and $A_{\ast }$ is commutative with identity and contained in the center $%
Z\left( A\right) .$ Moreover, we have that $T\left( A\right) \subseteq A_{0}$
with $T\left( A\right) =\left\{ 0\right\} $ if and only if $A_{0}=\left\{
0\right\} .$
\end{theorem}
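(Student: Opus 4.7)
The plan is to construct $A_0$ intrinsically as a kind of right-nilpotent radical, show it absorbs $[A,A]$, and then peel off a central commutative complement $A_\ast$ using an idempotent-lifting argument.

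First I would define
\begin{equation*}
A_0 := \{x \in A : R_x \text{ is a nilpotent operator}\}.
\end{equation*}
The goal of Step 1 is to prove $A_0$ is a two-sided ideal. The key tools are the derivation identity $L_{x\cdot y} = R_x \circ R_y$ (equation (\ref{eq3a})), together with the fact that every $R_x$ is a derivation of the associated Lie algebra $\mathcal{G}_A$. Combined with the standard argument that sums and ideal-translates of nilpotent derivations remain nilpotent (an Engel-type bookkeeping), one gets that $A_0$ is closed under addition, and that $A \cdot A_0,\, A_0 \cdot A \subseteq A_0$.

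Step 2 is to show $[A,A] \subseteq A_0$. Since $A$ is a derivation algebra, by the result of \cite{giraud-medina} cited above, $\mathcal{G}_A$ is solvable, so $[A,A]$ is a solvable Lie ideal; one exploits (\ref{eq3a}) once more to translate Lie-theoretic nilpotency into right-nilpotency of the $R_x$ for $x \in [A,A]$, which gives $[A,A] \subseteq A_0$. Consequently, the quotient $A/A_0$ has abelian associated Lie algebra, hence is commutative by Lemma \ref{lemma1}, and then commutativity together with (\ref{eq3}) forces associativity; by construction no nonzero element of $A/A_0$ has nilpotent right multiplication, so $A/A_0$ is a semisimple finite-dimensional commutative associative algebra over $\mathbb{F}$, and in particular contains an identity element $\bar e$.

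Step 3, the main obstacle, is to lift $\bar e$ to an idempotent $e \in A$ whose linear span (or more generally, the subalgebra it generates) gives a complement $A_\ast$ of $A_0$ that lies in $Z(A)$. The plan is a Wedderburn-principal-theorem style iteration: pick any preimage $e_0$ of $\bar e$; since $e_0^2 - e_0 \in A_0$ is right-nilpotent, a Newton-type recursion inside the associative-commutative quotient (lifted back via (\ref{eq3a})) converges to an honest idempotent $e \in A$. Centrality of $e$ is then forced by the derivation identity: $L_{e\cdot x} = R_e R_x$ and the fact that $e$ acts as identity on $A/A_0$ show $e$ commutes and associates with everything modulo $A_0$; the nilpotency of $R$ on $A_0$ together with $e^2 = e$ lets one propagate this to genuine centrality. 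Setting $A_\ast := eA$ and noting $A_0 = (1-e)A$ gives the direct sum decomposition, and uniqueness follows because any such splitting must identify $A_0$ with the right-nilpotent radical.

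For the final clause, the inclusion $T(A) \subseteq A_0$ is direct: if $x \in T(A)$ then $x \cdot z = 0$, so by (\ref{eq3}) we have $(y \cdot x) \cdot x = (x \cdot x) \cdot y = 0$, giving $R_x^2 = 0$. For the converse, suppose $A_0 \neq \{0\}$; in a nonzero complete left-symmetric algebra the descending chain of powers $A_0 \supsetneq A_0 \cdot A_0 \supsetneq \cdots$ terminates, and the last nonzero term provides a nonzero $x$ with $x \cdot A_0 = 0$; extending this across the whole splitting $A = A_0 \oplus A_\ast$ using centrality of $A_\ast$ yields a nonzero element of $T(A)$. Hence $T(A) = \{0\}$ iff $A_0 = \{0\}$.
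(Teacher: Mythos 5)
First, a point of comparison: the paper does not prove this statement at all --- Theorem \ref{medina thm} is imported verbatim from \cite{medina} and used as a black box --- so your proposal cannot be measured against an in-paper argument, only on its own merits. On those merits it has genuine gaps. The most serious one is Step 1: the set $A_{0}=\{x\in A: R_{x}\ \text{nilpotent}\}$ is not obviously a linear subspace, and the ``standard argument that sums of nilpotent derivations remain nilpotent'' does not exist --- the inner derivations attached to the two standard nilpotent generators of $\mathfrak{sl}_{2}$ are nilpotent while their sum is semisimple, so Engel-type reasoning requires the relevant operators to span a Lie algebra of nilpotent operators, which you have not established for the family $\{R_{x}\}$ of a derivation algebra. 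Everything downstream (that $A_{0}$ is an ideal, that it contains $[A,A]$) leans on this unproved closure. A second gap in the same step: even granting that $A_{0}$ is an ideal, the assertion that ``no nonzero element of $A/A_{0}$ has nilpotent right multiplication'' does not follow from the definition of $A_{0}$, since an element whose right multiplication is non-nilpotent on $A$ can perfectly well induce a nilpotent right multiplication on the quotient; so the semisimplicity of $A/A_{0}$, hence the existence of $\bar{e}$, is unsupported.

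The idempotent-lifting in Step 3 is also not a proof as written: the Newton/Wedderburn recursion manipulates powers $e_{0}^{2},e_{0}^{3},\dots$ of a single element, and in a nonassociative algebra these are not well defined, nor is the convergence of the recursion meaningful without first showing that the subalgebra generated by $e_{0}$ is associative (or that $A_{0}$ is an ideal on which some associative calculus applies). Finally, in the last clause the converse direction rests on the claim that a complete left-symmetric algebra has a terminating descending chain of right powers; completeness only says each individual $R_{x}$ is nilpotent, which does not imply right-nilpotency of the algebra (indeed the paper itself remarks in Section 4 that a largest right-nilpotent ideal need not even exist for general left-symmetric algebras), so the existence of a last nonzero power must be argued specifically for derivation algebras. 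The one piece that is complete and correct is the inclusion $T(A)\subseteq A_{0}$ via $(y\cdot x)\cdot x=(x\cdot x)\cdot y=0$, which is a nice use of (\ref{eq3}). To repair the rest you would need the actual structural input of \cite{medina} (e.g.\ exploiting $L_{x\cdot y}=R_{x}\circ R_{y}$ from (\ref{eq3a}) together with the trace form $tr(R_{x}R_{y})$ to produce the canonical idempotent), rather than generic radical-and-lifting heuristics.
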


\begin{corollary}
\label{corolla1}Let $A$ be a noncommutative derivation algebra over a field $%
\mathbb{F}.$ Then $T\left( A\right) \neq \left\{ 0\right\} .$
\end{corollary}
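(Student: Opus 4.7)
The plan is to derive this as an essentially immediate consequence of Theorem \ref{medina thm}, whose content already packages the hard work. The reasoning has only three short steps.

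First, I would unpack what it means for $A$ to be noncommutative: since the Lie bracket on $A$ is given by $[x,y]=x\cdot y-y\cdot x$, noncommutativity of the left-symmetric product is equivalent to $[A,A]\neq\{0\}$, i.e.\ the associated Lie algebra $\mathcal{G}_{A}$ is nonabelian.

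Second, I would invoke the decomposition $A=A_{0}\oplus A_{\ast}$ provided by Theorem \ref{medina thm}. Because $[A,A]\subseteq A_{0}$, the fact that $[A,A]\neq\{0\}$ forces $A_{0}\neq\{0\}$.

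Finally, I would apply the ``moreover'' clause of Theorem \ref{medina thm}, which states that $T(A)=\{0\}$ if and only if $A_{0}=\{0\}$. Taking the contrapositive gives $T(A)\neq\{0\}$, as required. There is no real obstacle here; the substance of the corollary lies entirely in Theorem \ref{medina thm}, and what remains is a one-line deduction. The only thing worth flagging explicitly in the writeup is the equivalence ``$A$ noncommutative $\Longleftrightarrow [A,A]\neq\{0\}$'' so the reader sees why the hypothesis plugs into the theorem via the inclusion $[A,A]\subseteq A_{0}$.
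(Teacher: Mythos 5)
Your proposal is correct and follows exactly the paper's own argument: noncommutativity gives $\left[ A,A\right] \neq \left\{ 0\right\} $, the inclusion $\left[ A,A\right] \subseteq A_{0}$ from Theorem \ref{medina thm} forces $A_{0}\neq \left\{ 0\right\} $, and the equivalence $T\left( A\right) =\left\{ 0\right\} \Leftrightarrow A_{0}=\left\{ 0\right\} $ finishes the deduction. Nothing to add.
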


\begin{proof}
By Theorem \ref{medina thm}, we have $\left[ A,A\right] \subseteq A_{0}.$
Since $A$ is noncommutative, we deduce that $A_{0}\neq \left\{ 0\right\} .$
Again, by Theorem \ref{medina thm}, this implies that $T\left( A\right) \neq
\left\{ 0\right\} .$
\end{proof}

\smallskip

Now, we consider a derivation algebra $A$ such that the center $\mathcal{Z}%
_{A}$ of its associated algebra $\mathcal{G}_{A}$ satisfies the inclusion $%
\mathcal{Z}_{A}\subseteq \left[ A,A\right] .$ In fact, there are a lot of
Lie algebras (even nilpotent of any step of nilpotency) which satisfy the
above inclusion. In that case, by Theorem \ref{medina thm} we have that $%
A_{\ast }\subseteq \mathcal{Z}\left( A\right) \subseteq \left[ A,A\right]
\subseteq A_{0},$ from which we deduce that $A_{\ast }=\left\{ 0\right\} ,$
that is, $A$ is complete. As a special case of this, let us consider a
derivation algebra $A$ with nonsingular $k$-step nilpotent associated Lie
algebra $\mathcal{G}_{A}$. In this case, we have%
\begin{equation*}
\mathcal{Z}_{A}=\mathcal{C}^{k-1}\mathcal{G}_{A}\mathcal{=Z}_{A}\cap \left[
A,A\right] \subseteq \left[ A,A\right] .
\end{equation*}

Thus, as an immediate consequence of Theorem \ref{medina thm}, we can also
state the following corollary.

\begin{corollary}
Let $A$ be a derivation algebra over a field $\mathbb{F}$ whose associated
Lie algebra $\mathcal{G}_{A}$ is nonsingular nilpotent. Then, $A$ is
complete.
\end{corollary}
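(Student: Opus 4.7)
The plan is to deduce this directly from Theorem \ref{medina thm} (Medina's splitting theorem for derivation algebras) by pushing the component $A_{\ast}$ into $A_{0}$ and then using the directness of the sum to conclude $A_{\ast}=\{0\}$.

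First, I would record what the nonsingularity hypothesis gives at the Lie-algebra level. If $\mathcal{G}_{A}$ is $k$-step nilpotent and nonsingular, then by definition $\mathcal{Z}_{A}=\mathcal{C}^{k-1}\mathcal{G}_{A}$, and since $\mathcal{C}^{k-1}\mathcal{G}_{A}=[\mathcal{G}_{A},\mathcal{C}^{k-2}\mathcal{G}_{A}]\subseteq[\mathcal{G}_{A},\mathcal{G}_{A}]=[A,A]$, we obtain the key inclusion $\mathcal{Z}_{A}\subseteq[A,A]$. This is precisely the hypothesis under which the paragraph just preceding the corollary derives completeness, so the corollary is essentially the specialization of that observation.

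Next, I would apply Theorem \ref{medina thm} to write $A=A_{0}\oplus A_{\ast}$, where $A_{0}$ is complete, $[A,A]\subseteq A_{0}$, and $A_{\ast}$ is commutative with identity and contained in $Z(A)$. Since $A$ is a derivation algebra, Lemma \ref{lemma2} gives $Z(A)=\mathcal{Z}_{A}$. Chaining the inclusions:
\begin{equation*}
A_{\ast}\subseteq Z(A)=\mathcal{Z}_{A}\subseteq[A,A]\subseteq A_{0}.
\end{equation*}
Because the sum $A=A_{0}\oplus A_{\ast}$ is direct, $A_{\ast}\subseteq A_{0}$ forces $A_{\ast}=\{0\}$, so $A=A_{0}$ is complete.

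There is really no main obstacle here, since all the substantive work is done in Theorem \ref{medina thm} and Lemma \ref{lemma2}; the only thing to verify with care is the inclusion $\mathcal{Z}_{A}\subseteq[A,A]$, which follows purely from the definition of nonsingular nilpotent. Thus the argument is a short chain of inclusions combined with directness of the sum.
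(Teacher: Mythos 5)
Your argument is correct and is essentially identical to the paper's own: the paper likewise observes that nonsingularity gives $\mathcal{Z}_{A}\subseteq\left[ A,A\right]$ and then chains $A_{\ast }\subseteq Z\left( A\right) =\mathcal{Z}_{A}\subseteq \left[ A,A\right] \subseteq A_{0}$ to force $A_{\ast }=\left\{ 0\right\}$ via the directness of the splitting in Theorem \ref{medina thm}. No gaps.
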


\section{Radicals of a left-symmetric algebra}

The radical of an associative algebra $A$ is the unique nilpotent ideal of $%
A $ which is maximal, that is it contains all nilpotent ideals of $A.$ No
such ideal exists in an arbitrary nonassociative algebra, and so the radical
of such an algebra has never been defined. For the case of left-symmetric
algebras, different types of radicals have been defined. We shall consider
here three of them.

\subsection{The (Koszul) radical of a left-symmetric algebra}

The radical of a left-symmetric algebra was firstly defined by J. L. Koszul
(see \cite{helmstetter}). Given a left-symmetric algebra $A$ over a field $%
\mathbb{F},$ one defines \emph{the radical} $R\left( A\right) $ of $A$ to be
the largest left ideal contained in the subset%
\begin{equation*}
I\left( A\right) =\left\{ a\in A:tr\left( R_{a}\right) =0\right\} .
\end{equation*}

It turns out that $R\left( A\right) $ is nothing but the largest complete
left ideal of $A.$ This has been proved in \cite{helmstetter} for the case $%
\mathbb{F}=\mathbb{C},$ and in \cite{chang-kim-myung} for $\mathbb{F}=%
\mathbb{R}.$

In general, the radical of an arbitrary left-symmetric algebra is not a
two-sided ideal (cf. \cite{helmstetter}). However, as we will see later, we
can deduce from \cite{zelmanov} that the radical of a Novikov algebra over a
field of characteristic zero is a two-sided ideal. This is also the case for
derivation algebras (see \cite{medina}). In the case of a left-symmetric
algebra satisfying (\ref{eq4}), we have the following result.

\begin{theorem}
\label{thm1}Let $A$ be a left-symmetric algebra satisfying (\ref{eq4}).
Then, $R\left( A\right) $ is a two-sided ideal of $A$ containing the derived
Lie algebra $\left[ A,A\right] .$
\end{theorem}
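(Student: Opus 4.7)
The plan is to split the statement into the two assertions and handle them in order: first establish $[A,A]\subseteq R(A)$, and then use this inclusion to upgrade $R(A)$ from a left ideal to a two-sided ideal.

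For the first assertion I would argue as follows. By Proposition \ref{prop2bis}, the hypothesis (\ref{eq4}) guarantees that $[A,A]$ is already a two-sided ideal of $A$, hence in particular a left ideal. Since $R(A)$ is defined as the largest left ideal contained in $I(A)$, it suffices to show $[A,A]\subseteq I(A)$, i.e.\ that $\operatorname{tr}(R_w)=0$ for every $w\in[A,A]$. By linearity this reduces to $w=[x,y]$. Taking traces in (\ref{eq1b}) and using the cyclicity of trace, the commutator $[L_x,R_y]$ has trace zero, so
\begin{equation*}
\operatorname{tr}(R_{xy}) \;=\; \operatorname{tr}(R_y\circ R_x) \;=\; \operatorname{tr}(R_x\circ R_y) \;=\; \operatorname{tr}(R_{yx}),
\end{equation*}
from which $\operatorname{tr}(R_{[x,y]})=0$. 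Thus $[A,A]$ is a left ideal lying in $I(A)$, so by maximality $[A,A]\subseteq R(A)$.

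For the second assertion, I would take $r\in R(A)$ and $a\in A$ and write
\begin{equation*}
r\cdot a \;=\; a\cdot r + [r,a].
\end{equation*}
The summand $a\cdot r = L_a r$ belongs to $R(A)$ because $R(A)$ is a left ideal, and $[r,a]\in[A,A]\subseteq R(A)$ by the first part. Hence $r\cdot a\in R(A)$, proving that $R(A)$ is also a right ideal.

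The main conceptual point, rather than a real obstacle, is locating where hypothesis (\ref{eq4}) actually enters: the trace calculation $\operatorname{tr}(R_{[x,y]})=0$ is a general identity in any left-symmetric algebra and does not require (\ref{eq4}); what (\ref{eq4}) buys us is exactly that $[A,A]$ is a left ideal, via Proposition \ref{prop2bis}, so that maximality forces it into $R(A)$. Once $[A,A]\subseteq R(A)$ is in hand, the right-ideal property is a one-line consequence of the commutator identity $r\cdot a-a\cdot r=[r,a]$, with no further appeal to the structure of left-symmetric algebras beyond $R(A)$ being a left ideal.
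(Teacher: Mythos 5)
Your proof is correct, and the second half (deducing the right-ideal property from $r\cdot a=a\cdot r+[r,a]$ once $[A,A]\subseteq R(A)$ is known) is exactly the paper's argument. Where you diverge is in establishing the inclusion $[A,A]\subseteq R(A)$. The paper observes that, by (\ref{eq4a}), every right multiplication vanishes on $[A,A]$, so $[A,A]$ is a \emph{complete} two-sided ideal, and then invokes the characterization of $R(A)$ as the largest complete left ideal (the cited results of Helmstetter and Chang--Kim--Myung, stated in the text only for $\mathbb{F}=\mathbb{C}$ and $\mathbb{F}=\mathbb{R}$). You instead work directly from Koszul's definition of $R(A)$ as the largest left ideal inside $I(A)=\{a:\operatorname{tr}(R_a)=0\}$, using the general trace identity $\operatorname{tr}(R_{[x,y]})=0$ obtained from (\ref{eq1b}); combined with Proposition \ref{prop2bis} this puts $[A,A]$ into $R(A)$ by maximality. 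Your route is more self-contained and, unlike the paper's, does not lean on the completeness characterization of the radical, so it works verbatim over any field of characteristic zero; the paper's route, on the other hand, yields the slightly stronger intermediate fact that $[A,A]$ is a complete ideal, which is what feeds the corollary about extensions by complete algebras. Your closing remark correctly isolates the role of (\ref{eq4}): the trace identity is generic, and the hypothesis is needed precisely to make $[A,A]$ an ideal of the left-symmetric structure.
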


\begin{proof}
By (\ref{eq4a}), the right multiplication $R_{x}$ is identically zero on $%
\left[ A,A\right] $ for all $x\in A.$ Therefore, $\left[ A,A\right] $ is a
complete subalgebra of $A.$ On the other hand, , by Proposition \ref%
{prop2bis}, $\left[ A,A\right] $ is a two-sided ideal of $A.$ It follows
that $\left[ A,A\right] \subseteq R\left( A\right) .$ This in turn implies
that $R\left( A\right) $ is a Lie ideal, and since it is a left ideal, we
deduce that $R\left( A\right) $ is a two-sided ideal containing $\left[ A,A%
\right] .$
\end{proof}

\begin{corollary}
Every left-symmetric algebra satisfying (\ref{eq4}) can be obtained as an
extension of a commutative associative algebra by a complete left-symmetric
algebra satisfying (\ref{eq4}).
\end{corollary}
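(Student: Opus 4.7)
The plan is to build the desired extension directly from Theorem \ref{thm1}, taking the Koszul radical as the kernel and the quotient as the commutative associative piece. The argument is essentially a packaging of what has already been proved, and I expect no serious obstacle.

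First, I would invoke Theorem \ref{thm1} to obtain that $R(A)$ is a two-sided ideal of $A$ with $[A,A]\subseteq R(A)$. This gives a short exact sequence of left-symmetric algebras
\begin{equation*}
0\longrightarrow R(A)\longrightarrow A\longrightarrow A/R(A)\longrightarrow 0,
\end{equation*}
so the real task is only to identify the two outer terms with the objects named in the statement.

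Next I would analyze the quotient. Since $[A,A]\subseteq R(A)$, every commutator in $A/R(A)$ vanishes, so $A/R(A)$ is commutative. By the first lemma of Subsection 2.1, a commutative left-symmetric algebra is automatically associative (its associated Lie algebra is abelian, and the lemma then guarantees associativity). Hence $A/R(A)$ is a commutative associative algebra, as required.

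Finally I would verify that $R(A)$ itself has the stated properties. Completeness is built into the definition of the Koszul radical recalled just before the theorem: $R(A)$ is the largest complete left ideal of $A$, and in particular it is complete as a left-symmetric algebra in its own right. The identity (\ref{eq4}) is a universal identity in three variables, so it is inherited by every subalgebra of $A$; applied to $R(A)\subseteq A$, it shows that $R(A)$ again satisfies (\ref{eq4}). Putting these facts together exhibits $A$ as an extension of the commutative associative algebra $A/R(A)$ by the complete left-symmetric algebra $R(A)$ satisfying (\ref{eq4}), which is the claim.
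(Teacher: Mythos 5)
Your proposal is correct and is exactly the argument the paper intends: the corollary is left as an immediate consequence of Theorem \ref{thm1}, with $R(A)$ (the largest complete left ideal, here a two-sided ideal containing $[A,A]$) as the kernel and the commutative, hence associative, quotient $A/R(A)$ as the other term. Your identification of which piece is the kernel and which is the quotient also matches the paper's usage of ``extension of $X$ by $Y$,'' as seen in the example at the end of Section 4.
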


\subsection{The left (resp. right)-nilpotent radical}

Let $A$ be a left-symmetric algebra over a field $\mathbb{F}$ of
characteristic zero, and $I$ a two-sided ideal of $A.$ We say that $I$ is 
\emph{left-nilpotent} if there exists some fixed integer $n\geq 1$ such that 
$L_{a_{1}}\cdots L_{a_{n}}=0$ for all $a_{i}\in I.$ It is not difficult to
show that every finite-dimensional left-symmetric algebra $A$ has a unique
maximal left-nilpotent ideal $L\left( A\right) ,$ called the \emph{left
radical} of $A$ (see \cite{chang-kim-myung}).

It is also clear that if $I$ is a left-nilpotent ideal of a left-symmetric
algebra $A,$ then the left multiplications $L_{a}$ are nilpotent for all $%
a\in I.$ On the other hand, it is well known that given a left-symmetric
algebra $A,$ then we have: all left multiplications $L_{a}$ are nilpotent if
and only if all the right multiplications $R_{a}$ are nilpotent and the
associated Lie algebra $\mathcal{G}_{A}$ is nilpotent (see \cite{scheuneman}%
; see also \cite{kim}, Theorem 2.1 and Theorem 2.2). We deduce from these
two facts that the left radical $L\left( A\right) $ of an arbitrary
left-symmetric algebra $A$ is a complete ideal; and consequently we have
that $L\left( A\right) \subseteq R\left( A\right) .$

\medskip

Similarly we define an ideal $I$ to be \emph{right-nilpotent} if there
exists some fixed integer $n\geq 1$ such that $R_{a_{1}}\cdots R_{a_{n}}=0$
for all $a_{i}\in I.$ It follows immediately that any right-nilpotent
algebra is complete. However, unlike the left-nilpotent case, the largest
right-nilpotent ideal need not exist for an arbitrary left-symmetric
algebra, because the sum of any two right-nilpotent ideals need not be
right-nilpotent. However, it was shown in \cite{zelmanov} that a Novikov
algebra $A$ has always a unique maximal right-nilpotent two-sided ideal $%
N\left( A\right) ,$ called the \emph{right radical} of $A.$ In the same
paper, it was also shown that $I\left( A\right) =\left\{ a\in A:tr\left(
R_{a}\right) =0\right\} $ is a two-sided ideal that is right-nilpotent. From
these two facts, we can deduce the following:

\begin{proposition}
\label{prop radicals}Let $A$ be a Novikov algebra over a field $\mathbb{F}$
of characteristic zero. Then, we have $N\left( A\right) =R\left( A\right)
=I\left( A\right) .$
\end{proposition}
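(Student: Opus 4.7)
The plan is to extract everything from the two results of Zelmanov cited just before the statement: (i) $N(A)$ is the unique maximal right-nilpotent two-sided ideal of $A$, and (ii) $I(A)=\{a\in A:tr(R_{a})=0\}$ is itself a right-nilpotent two-sided ideal. Together with the fact (also quoted above) that every right-nilpotent left-symmetric algebra is complete, these give both equalities essentially for free.

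First I would establish $N(A)=I(A)$. The inclusion $I(A)\subseteq N(A)$ is immediate from (ii) and the maximality of $N(A)$ in (i). For the reverse inclusion, take any $a\in N(A)$; right-nilpotence of $N(A)$ means there is a fixed integer $n$ with $R_{a_{1}}\cdots R_{a_{n}}=0$ for all $a_{i}\in N(A)$, and specializing $a_{1}=\cdots=a_{n}=a$ yields $R_{a}^{n}=0$. Thus $R_{a}$ is nilpotent, $tr(R_{a})=0$, and $a\in I(A)$.

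Next I would establish $R(A)=I(A)$. By definition $R(A)$ is a left ideal contained in $I(A)$, so $R(A)\subseteq I(A)$. Conversely, $I(A)=N(A)$ is right-nilpotent, hence complete by the characterization quoted above; it is thus a complete two-sided ideal contained in $I(A)$, and in particular a complete left ideal contained in $I(A)$. Since $R(A)$ is characterized as the largest complete left ideal sitting inside $I(A)$, this forces $I(A)\subseteq R(A)$.

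The only step that calls for any real care is the passage from the uniform right-nilpotence defining $N(A)$ to nilpotence of each individual $R_{a}$; this is immediate by the specialization above, but worth stating, since the whole chain of equalities hinges on it. Everything else is formal manipulation of the definitions of $R(A)$, $N(A)$ and $I(A)$ combined with Zelmanov's two results, and no properties of Novikov algebras beyond those results are needed.
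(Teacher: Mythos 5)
Your proposal is correct and follows essentially the same route as the paper: $I(A)\subseteq N(A)$ by maximality of the right radical, and the link to $R(A)$ via the fact that a right-nilpotent ideal is complete while $R(A)$ is the largest complete left ideal. The only cosmetic difference is that you prove $N(A)\subseteq I(A)$ directly (nilpotent $R_{a}$ has trace zero), whereas the paper obtains it for free by closing the cycle $I(A)\subseteq N(A)\subseteq R(A)\subseteq I(A)$.
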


\begin{proof}
As mentioned above, since $A$ is a Novikov algebra then the right radical $%
N\left( A\right) $ exists and the two-sided ideal $I\left( A\right) $ is
right-nilpotent. It follows from maximality of $N\left( A\right) $ that $%
I\left( A\right) \subseteq N\left( A\right) .$ On the other hand, since $%
N\left( A\right) $ is right-nilpotent, then right multiplications $R_{a}$
are nilpotent, and consequently $N\left( A\right) $ is complete. Thus, $%
N\left( A\right) \subseteq R\left( A\right) .$ But the definition of $%
R\left( A\right) $ says that $R\left( A\right) \subseteq I\left( A\right) .$
Hence, we have $I\left( A\right) \subseteq N\left( A\right) \subseteq
R\left( A\right) \subseteq I\left( A\right) ,$ as desired.
\end{proof}

\bigskip

As we mentioned above, a derivation algebra $A$ can always be uniquely
decomposed into a direct sum of two-sided ideals $A_{0}$ and $A_{\ast }$
such that $A_{0}$ is complete and contains the derived ideal $\left[ A,A%
\right] $ and $A_{\ast }$ is commutative with identity and contained in the
center $\mathcal{Z}\left( A\right) .$ In particular, for a derivation
algebra $A$ we have $A_{0}\subseteq R\left( A\right) .$ We give here an
example of a derivation algebra $A$ such that $A_{0}\varsubsetneq R\left(
A\right) .$

\begin{example}
Over the field $\mathbb{F}=\mathbb{R}$ or $\mathbb{C},$ consider the
two-dimensional left-symmetric algebra $A$ defined by following
multiplication table:$\ e_{1}\cdot e_{1}=e_{1},\ \ e_{1}\cdot
e_{2}=e_{2}\cdot e_{1}=e_{2}.$ Since the Lie algebra associated to $A$ is
commutative, it follows that $A$ is a derivation algebra (it is also Novikov
and satisfies (\ref{eq4})). It is now easy to see that $N\left( A\right)
=R\left( A\right) =\mathbb{F}e_{2}.$ When we look at $A$ as a derivation
algebra, we see that $A_{0}=\left\{ 0\right\} .$ This shows that $%
A_{0}\varsubsetneq R\left( A\right) .$ On the other hand, we should also
notice that, being a Novikov algebra, $A$ can be obtained as an extension of
the field $\mathbb{F}e_{1}$ by $N\left( A\right) =\mathbb{F}e_{2}.$
\end{example}

\section{\protect\bigskip Simple left-symmetric algebras}

An algebra $A$ over a filed $\mathbb{F}$ is called simple if it has no
proper two-sided ideal and $A$ is not the zero algebra of dimension $1.$
Therefore, since $A^{2}=A\cdot A$ is a two-sided ideal of $A,$ we have $%
A^{2}=A$ in case $A$ is simple.

In \cite{zelmanov}, the following result was proved.

\begin{theorem}
\label{zelmanov thm}A simple Novikov algebra $A$ over a field $\mathbb{F}$
of characteristic zero is isomorphic to $\mathbb{F}.$
\end{theorem}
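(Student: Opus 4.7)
The plan is to combine Proposition \ref{prop radicals}, which identifies the Koszul radical $R(A)$, the right-nilpotent radical $N(A)$, and the trace subspace $I(A)$ for a Novikov algebra, with the trivial fact that $A\cdot A$ is always a two-sided ideal in any algebra. A two-case analysis, according to whether the common radical $I(A)$ is zero or the whole of $A$, will force $A$ to be one-dimensional.

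First I would observe that in any algebra the subspace $A\cdot A$ is automatically a two-sided ideal (closed under multiplication by $A$ from either side by construction). Since $A$ is simple and, by definition, is not the zero algebra of dimension one, we cannot have $A\cdot A = \{0\}$ (otherwise every subspace would be a two-sided ideal), so $A\cdot A = A$. Introducing the right-associated powers $A^{(1)} = A$ and $A^{(k+1)} = A^{(k)}\cdot A$, one obtains $A^{(2)} = A$, and then by induction $A^{(k)} = A$ for every $k\geq 1$.

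Next, by Proposition \ref{prop radicals}, $I(A) = N(A) = R(A)$ is a two-sided ideal of $A$. Simplicity leaves only two possibilities. If $I(A) = \{0\}$, then the linear functional $\phi:A\to\mathbb{F}$ defined by $\phi(a) = tr(R_a)$ is injective with image in $\mathbb{F}$, so $\dim_{\mathbb{F}}A \leq 1$; since $A$ is simple, hence nonzero, $\dim A = 1$, and together with $A\cdot A = A$ this yields $A\cong\mathbb{F}$ (a one-dimensional algebra with $A^{2}=A$ admits an idempotent basis vector). If instead $I(A) = A$, then by Proposition \ref{prop radicals} $A$ coincides with its right-nilpotent radical $N(A)$, so there exists $n\geq 1$ with $R_{a_{1}}\cdots R_{a_{n}} = 0$ for all $a_{i}\in A$; equivalently $A^{(n+1)} = \{0\}$, contradicting $A^{(n+1)} = A\neq\{0\}$ from the previous step. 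So only the first case occurs and $A\cong\mathbb{F}$.

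The genuine content here is hidden in Proposition \ref{prop radicals} and the two underlying facts (drawn from \cite{zelmanov}) that in the Novikov setting $I(A)$ is actually a two-sided ideal and that a unique maximal right-nilpotent ideal $N(A)$ exists; once those are granted, the simplicity reduction above is essentially formal. The main obstacle, therefore, is not the case analysis but proving that $I(A)$ is two-sided: in a general left-symmetric algebra it need only be a left ideal, and it is the Novikov identity $[R_{x},R_{y}]=0$ (and its consequence $L_{x\cdot y}=R_{y}\circ L_{x}$) that promotes it to a two-sided, right-nilpotent ideal. The characteristic-zero hypothesis enters only through those preparatory results and through the nontriviality of the trace map on nilpotent operators.
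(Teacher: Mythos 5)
The paper does not actually prove this statement: it is quoted verbatim from \cite{zelmanov}, so there is no internal proof to compare against. The closest analogue is the paper's proof of Theorem \ref{thm2} (the real case), which runs the same dichotomy on the radical but then invokes Propositions 1 and 2 of \cite{zelmanov} (right-nilpotency of $A^{2}$, ``direct sum of fields'') plus results of \cite{bai}. Your derivation from Proposition \ref{prop radicals} is formally correct and is in fact more elementary in both branches: in the case $I(A)=\{0\}$ you replace the ``direct sum of fields'' step by the observation that $a\mapsto tr(R_{a})$ is an injective linear functional, forcing $\dim A\leq 1$; in the case $I(A)=A$ you contradict $A^{(k)}=A$ directly from the definition of right-nilpotency rather than via Zelmanov's Proposition 1. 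Granting Proposition \ref{prop radicals}, the case analysis is airtight.

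The genuine problem is that your argument proves too much, and you should have caught the warning sign inside this very paper: the statement as written (over an arbitrary field of characteristic zero) is contradicted by the paper's own example $A_{2,\mathbb{R}}$, a two-dimensional simple Novikov algebra over $\mathbb{R}$; the introduction and the remark following the theorem make clear that the result only holds when $\mathbb{F}$ is algebraically closed. Since your deduction from Proposition \ref{prop radicals} is valid, the failure must be located in that proposition, and it is: for $A=A_{2,\mathbb{R}}$ one computes $R_{e_{1}}=\mathrm{id}$ and $tr(R_{e_{2}})=0$, so $I(A)=\mathbb{R}e_{2}$, which is not even a left ideal (because $e_{2}\cdot e_{2}=-e_{1}$), whereas $N(A)=R(A)=\{0\}$. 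Thus the ingredient you correctly identify as carrying ``the genuine content'' --- that $I(A)$ is a two-sided right-nilpotent ideal --- itself requires $\mathbb{F}$ algebraically closed, and so does Proposition \ref{prop radicals} and the theorem. With that hypothesis inserted in both places your proof stands; without it, Case 1 of your argument combined with $A_{2,\mathbb{R}}$ is precisely a disproof of Proposition \ref{prop radicals} as stated.
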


It is worth mentioning that when the field $\mathbb{F}$ is not algebraically
closed, then simple Novikov algebras over $\mathbb{F}$ of dimension $\geq 2$
can exist. Here is an example of a two-dimensional simple Novikov algebra
over $\mathbb{R}.$

\begin{example}[A two-dimensional simple Novikov algebra over $\mathbb{R}$]
Over the field $\mathbb{F}=\mathbb{R}$ or $\mathbb{C},$ let us consider the
two-dimensional commutative associative algebra $A_{2,\mathbb{F}}$ defined
by the following multiplication table:$\ e_{1}\cdot e_{1}=e_{1},\ \
e_{1}\cdot e_{2}=e_{2}\cdot e_{1}=e_{2},\ \ e_{2}\cdot e_{2}=-e_{1}.$ Being
commutative, $A_{2,\mathbb{F}}$ is a Novikov algebra; and by setting $%
e_{1}^{\prime }=\frac{1}{2}\left( e_{1}+ie_{2}\right) ,\ \ e_{2}^{\prime }=%
\frac{1}{2}\left( e_{1}-ie_{2}\right) ,$ we can easily see that $A_{2,%
\mathbb{C}}$ is a direct sum of fields, that is $A_{2,\mathbb{C}}\cong 
\mathbb{C}\oplus \mathbb{C}.$ However, it is not difficult to show that $%
A_{2,\mathbb{R}}$ is simple.
\end{example}

\begin{remark}
It is worth pointing out that, in the example above, $A_{2,\mathbb{C}}$ is
nothing but the complexification of $A_{2,\mathbb{R}}.$ It follows that the
complexification of a simple left-symmetric algebra (even Novikov) need not
be simple.
\end{remark}

Anyway, the following theorem will show that $A_{2,\mathbb{R}}$ is the only
simple Novikov algebra over $\mathbb{R}$ of dimension $\geq 2.$ To prove the
theorem, we need to recall the notion of complexification of a
left-symmetric algebra.

Let $A$ be a real left-symmetric algebra of dimension $n,$ and let $A^{%
\mathbb{C}}$ denote the real vector space $A\oplus A.$ Let $J:A\oplus
A\rightarrow A\oplus A$ be the linear map on $A\oplus A$ defined by $J\left(
x,y\right) =\left( -y,x\right) .$

For $\alpha +i\beta \in \mathbb{C}$ and $x,x^{\prime },y,y^{\prime }\in A,$
we define%
\begin{equation}
\left( \alpha +i\beta \right) \left( x,y\right) =\left( \alpha x-\beta
y,\alpha y+\beta x\right)  \label{cmplx1}
\end{equation}%
and%
\begin{equation}
\left( x,y\right) \cdot \left( x^{\prime },y^{\prime }\right) =\left(
xx^{\prime }-yy^{\prime },xy^{\prime }+yx^{\prime }\right)  \label{cmplx2}
\end{equation}

We endow the set $A^{\mathbb{C}}$ with the componentwise addition,
multiplication by complex numbers defined by (\ref{cmplx1}), and the product
defined by (\ref{cmplx2}). It is then straightforward to verify that, if
endowed with the product defined by (\ref{cmplx2}), $A^{\mathbb{C}}$ becomes
a complex left-symmetric algebra of dimension $n$ that we call \emph{the
complexification}\textsl{\ }of $A.$ In that case, the left-symmetric algebra 
$A$ can be identified with the set of elements in $A^{\mathbb{C}}$ of the
form $\left( x,0\right) ,$ where $x\in A.$ Furthermore, if $e_{1},\ldots
,e_{n}$ is a basis in $A,$ then the elements $\left( e_{1},0\right) ,\ldots
,\left( e_{n},0\right) $ form a basis in the complex vector space $A^{%
\mathbb{C}}.$

\begin{theorem}
\label{thm2}Let $A$ be simple Novikov algebra over $\mathbb{R}.$ Then, $A$
is isomorphic to either $A_{2,\mathbb{R}}$ or the field $\mathbb{R}.$
\end{theorem}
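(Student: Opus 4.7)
The plan is to bootstrap Zelmanov's theorem (Theorem~\ref{zelmanov thm}) from $\mathbb{C}$ to $\mathbb{R}$ by complexification. If $\dim_{\mathbb{R}} A = 1$ then $A^{2}=A$ forces $A \cong \mathbb{R}$ and there is nothing to do, so assume $n := \dim_{\mathbb{R}} A \geq 2$ and form the complexification $A^{\mathbb{C}}$ introduced above. It is a complex Novikov algebra of complex dimension $n$, equipped with the canonical involution $\sigma\colon A^{\mathbb{C}} \to A^{\mathbb{C}}$, $\sigma(x,y)=(x,-y)$, which is $\mathbb{R}$-linear, $\mathbb{C}$-antilinear, a ring homomorphism, and whose fixed set is (identified with) $A$. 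The standard correspondence $K \mapsto K \cap A$ is a bijection from $\sigma$-invariant complex two-sided ideals of $A^{\mathbb{C}}$ onto real two-sided ideals of $A$, so simplicity of $A$ forces every $\sigma$-invariant complex ideal to equal $\{0\}$ or $A^{\mathbb{C}}$.

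For any complex two-sided ideal $J$, both $J \cap \sigma(J)$ and $J + \sigma(J)$ are $\sigma$-invariant. If $A^{\mathbb{C}}$ itself were simple, Zelmanov's theorem would give $A^{\mathbb{C}} \cong \mathbb{C}$, contradicting $n \geq 2$; hence a proper nonzero $J$ exists, and the above dichotomy forces $J \cap \sigma(J) = \{0\}$ together with $J + \sigma(J) = A^{\mathbb{C}}$, i.e.\ $A^{\mathbb{C}} = J \oplus \sigma(J)$ as a direct sum of complex two-sided ideals. In particular, $n$ is even and $\dim_{\mathbb{C}} J = n/2$. I would next show that $J$ is a simple complex Novikov algebra in its own right. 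Since $J$ and $\sigma(J)$ are ideals with trivial intersection, $J \cdot \sigma(J) = \sigma(J) \cdot J = \{0\}$, so the two-sided ideals of $J$ (viewed as a standalone algebra) coincide with the two-sided ideals of $A^{\mathbb{C}}$ contained in $J$; any proper nonzero such $K$ would make $K \oplus \sigma(K)$ a proper nonzero $\sigma$-invariant ideal of $A^{\mathbb{C}}$, contradicting simplicity of $A$. Furthermore, $J^{2}$ is itself a two-sided ideal of $A^{\mathbb{C}}$: the Novikov identity $(jj')a = (ja)j'$ gives right-closure, while the left-symmetric identity $a(jj') = (aj)j' - (ja)j' + j(aj')$ gives left-closure. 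A vanishing $J^{2}=\{0\}$ together with $\sigma(J)^{2}=\{0\}$ and the cross-products just computed would force all products in $A^{\mathbb{C}}$ to vanish, contradicting $A^{2}=A$ (which follows from simplicity). Hence Zelmanov's theorem applies to $J$ and yields $J \cong \mathbb{C}$, so $n = 2$ and $A^{\mathbb{C}} \cong \mathbb{C} \oplus \mathbb{C}$ with componentwise product.

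To finish, the involution $\sigma$ cannot preserve each factor of $\mathbb{C}\oplus\mathbb{C}$ setwise, for then $A$ would split as $\mathbb{R}\oplus\mathbb{R}$ and fail to be simple; so $\sigma$ must swap the two factors. Its fixed set is then the real diagonal $\{(z,\bar{z}):z\in\mathbb{C}\}$, a two-dimensional commutative associative real algebra isomorphic to $\mathbb{C}$ viewed over $\mathbb{R}$. Sending $(1,1)\mapsto e_{1}$ and $(i,-i)\mapsto e_{2}$ reproduces exactly the multiplication table $e_{1}\cdot e_{1}=e_{1},\ e_{1}\cdot e_{2}=e_{2}\cdot e_{1}=e_{2},\ e_{2}\cdot e_{2}=-e_{1}$, giving $A\cong A_{2,\mathbb{R}}$. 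The main technical hurdle is the simplicity analysis in the second paragraph: one must argue carefully that $J$ is genuinely a simple Novikov algebra (rather than merely admitting no proper ideal contained in it), which requires invoking both the Novikov and left-symmetric identities to handle $J^{2}$.
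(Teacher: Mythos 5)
Your proof is correct, but it takes a genuinely different route from the paper's. The paper's argument runs through the radical theory of Section 4: it uses Proposition \ref{prop radicals} to identify the right radical $N(A)$ with the trace ideal $I(A)=\{a: \mathrm{tr}(R_a)=0\}$, notes that this description commutes with complexification, rules out $N(A)^{\mathbb{C}}=A^{\mathbb{C}}$ via Zelmanov's result that a right-nilpotent Novikov algebra has nilpotent square, invokes his structure theorem (trivial radical implies a direct sum of fields) to get $A^{\mathbb{C}}\cong\mathbb{C}$ or $\mathbb{C}\oplus\mathbb{C}$, and then cites Lemma 2.10 and Corollary 3.3 of \cite{bai} to descend to $\mathbb{R}$. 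You instead perform the real--complex descent by hand using the conjugation involution $\sigma(x,y)=(x,-y)$: the bijection between $\sigma$-invariant ideals of $A^{\mathbb{C}}$ and ideals of $A$ forces either $A^{\mathbb{C}}$ simple (excluded by Theorem \ref{zelmanov thm} once $n\geq 2$) or $A^{\mathbb{C}}=J\oplus\sigma(J)$ with $J$ a simple complex Novikov algebra, and a second application of Theorem \ref{zelmanov thm} gives $J\cong\mathbb{C}$, hence $n=2$ and $A\cong A_{2,\mathbb{R}}$ as the real diagonal of $\mathbb{C}\oplus\mathbb{C}$. What your route buys is self-containedness: it needs only the simple case of Zelmanov's theorem over the algebraically closed field $\mathbb{C}$ and avoids both the radical machinery and the external citations to \cite{bai}; the price is the careful verification that $J$ is genuinely simple as a standalone algebra, which you handle correctly via $J\cdot\sigma(J)=\sigma(J)\cdot J=\{0\}$ and the observation that $J^{2}=\{0\}$ would annihilate all products and contradict $A^{2}=A$. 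One small remark: your claim that $J^{2}$ is a two-sided ideal of $A^{\mathbb{C}}$ is correct but not actually needed, since once the cross-products vanish, $J^{2}=\{0\}$ already forces $(A^{\mathbb{C}})^{2}=\{0\}$ without any ideal property.
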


\begin{proof}
Let $A^{\mathbb{C}}$ be the complexification of $A.$ Since $A$ is Novikov,
then it is easy to check that $A^{\mathbb{C}}$ is Novikov as well. Since, by
Proposition \ref{prop radicals} we have $N\left( A\right) =I\left( A\right)
, $ it follows that $N\left( A\right) ^{\mathbb{C}}=N\left( A^{\mathbb{C}%
}\right) ,$ where $N\left( A\right) ^{\mathbb{C}}$ is the complexification
of $N\left( A\right) $ and $N\left( A^{\mathbb{C}}\right) $ is the right
radical of $A^{\mathbb{C}}.$ But since $A$ is simple, we have either $%
N\left( A\right) =\left\{ 0\right\} $ or $N\left( A\right) =A.$ From this we
deduce that either $N\left( A\right) ^{\mathbb{C}}=\left\{ 0\right\} $ or $%
N\left( A\right) ^{\mathbb{C}}=A^{\mathbb{C}}.$ If $N\left( A\right) ^{%
\mathbb{C}}=A^{\mathbb{C}},$ then $A^{\mathbb{C}}$ is right-nilpotent. By
Proposition 1 of \cite{zelmanov}, $\left( A^{\mathbb{C}}\right) ^{2}$ is
nilpotent. If $\left( A^{\mathbb{C}}\right) ^{2}=\left\{ 0\right\} ,$ then $%
A^{2}=\left\{ 0\right\} $ which implies that $A$ is not simple, a
contradiction. Theus, we necessarily have $\left( A^{\mathbb{C}}\right)
^{2}\neq \left\{ 0\right\} .$ It follows that there exists $k\geq 1$ such
that $\left( A^{\mathbb{C}}\right) ^{2k}\varsupsetneq \left( A^{\mathbb{C}%
}\right) ^{2k+2}=\left\{ 0\right\} .$ This implies that $\left( A^{\mathbb{C}%
}\right) ^{2k}$ is a non-trivial two-sided ideal of $A^{\mathbb{C}}.$ It
follows that $A^{2k}$ is a non-trivial two-sided ideal of $A,$ which leads
to a contradiction since $A$ is assumed to be simple. Thus, $N\left(
A\right) ^{\mathbb{C}}=\left\{ 0\right\} .$ In this case, Proposition 2 of 
\cite{zelmanov} tells us that $A^{\mathbb{C}}$ is a direct sum of fields. On
the other hand, by Lemma 2.10 of \cite{bai}, $A^{\mathbb{C}}$ is simple or a
direct sum of two simple ideals. It follows that $A^{\mathbb{C}}$ is
isomorphic to either the field $\mathbb{C}$ or the direct sum $\mathbb{C}%
\oplus \mathbb{C}.$ Now, by applying Corollary 3.3 of \cite{bai}, we deduce
that $A$ is isomorphic to either the field $\mathbb{R}$ or $A_{2,\mathbb{R}%
}, $ as desired.
\end{proof}

\bigskip

For derivation algebras and left-symmetric algebras satisfying (\ref{eq4}),
we have the following immediate consequence of Proposition \ref{prop2bis}
and Corollary \ref{corolla1}.

\begin{lemma}
\label{lemma3}A simple left-symmetric algebra over a field $\mathbb{F}$
which is derivation or satisfying (\ref{eq4}) is necessarily commutative.
\end{lemma}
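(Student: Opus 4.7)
The plan is to dispatch the two cases (derivation algebra vs.\ algebra satisfying (\ref{eq4})) separately, in each case using the structural result already proved earlier in the paper to exhibit a nontrivial two-sided ideal whenever $A$ is noncommutative, and then invoke simplicity to reach a contradiction. The only edge case to keep an eye on is the possibility $A \cdot A = \{0\}$, which we will rule out by the convention that a simple algebra is not the one-dimensional zero algebra (so $A^2 = A$ for any simple $A$).

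First suppose $A$ is derivation. By Corollary \ref{corolla1}, if $A$ were noncommutative, then $T(A) \neq \{0\}$. But $T(A)$ is a two-sided ideal of $A$, so by simplicity $T(A) = A$. This would mean $x \cdot y = 0$ for all $x,y \in A$, i.e.\ $A^2 = \{0\}$; then every subspace of $A$ is a two-sided ideal, contradicting simplicity (the one-dimensional zero algebra being excluded by definition). Hence $A$ must be commutative in the derivation case.

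Next suppose $A$ satisfies (\ref{eq4}). By Proposition \ref{prop2bis}, $[A,A]$ is a two-sided ideal, so simplicity forces $[A,A] = \{0\}$ or $[A,A] = A$. In the first case $A$ is commutative and we are done. In the second case, identity (\ref{eq4a}) gives $L_x = 0$ for every $x \in [A,A] = A$, hence $A \cdot A = \{0\}$, which is the same contradiction as before.

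There really is no hard step here; the work was already done in Proposition \ref{prop2bis} and Corollary \ref{corolla1}. The only thing requiring a little care is the exclusion of $A^2 = \{0\}$, and this is handled purely by the definition of simplicity adopted at the start of the section.
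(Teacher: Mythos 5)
Your proof is correct and follows exactly the route the paper intends: the lemma is stated there as an immediate consequence of Proposition \ref{prop2bis} and Corollary \ref{corolla1}, and you have simply supplied the short argument the author leaves implicit, including the correct use of $A^{2}=A$ for a simple algebra to exclude the degenerate case $A\cdot A=\{0\}$. No gaps.
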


If the field is not algebraically closed, we have the following immediate
consequence of Theorem \ref{thm2} and Lemma \ref{lemma3}.

\begin{proposition}
\label{prop4}A simple left-symmetric algebra over $\mathbb{R}$ which is
derivation or satisfying (\ref{eq4}) is isomorphic to either $A_{2,\mathbb{R}%
}$ or the field $\mathbb{R}.$
\end{proposition}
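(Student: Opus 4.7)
The plan is to reduce to the Novikov case and then quote Theorem \ref{thm2}. Let $A$ be a simple left-symmetric algebra over $\mathbb{R}$ which is either a derivation algebra or satisfies (\ref{eq4}). First, I would apply Lemma \ref{lemma3} to conclude that $A$ is commutative. This is the key step that collapses the two hypotheses into a single, more structured situation.

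Next, I would invoke the first lemma of Section 2, which asserts that a commutative left-symmetric algebra is automatically Novikov (in fact it is also associative and derivation, though only the Novikov property is needed here). Thus $A$ is a simple Novikov algebra over $\mathbb{R}$. Applying Theorem \ref{thm2} to this situation yields that $A$ is isomorphic to either $\mathbb{R}$ or to the two-dimensional algebra $A_{2,\mathbb{R}}$, which is the desired conclusion.

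There is essentially no hard step: the proposition is a short chain of implications, namely
\begin{equation*}
\text{simple + (derivation or (\ref{eq4}))} \;\Longrightarrow\; \text{commutative} \;\Longrightarrow\; \text{Novikov} \;\Longrightarrow\; A \cong \mathbb{R} \text{ or } A \cong A_{2,\mathbb{R}}.
\end{equation*}
The potential concern one should guard against is circularity: Theorem \ref{thm2} is proved using the classification of simple Novikov algebras over $\mathbb{C}$ together with the real/complex correspondence, and this chain does not rely on the derivation or (\ref{eq4}) hypothesis, so the invocation is legitimate. Beyond this sanity check, the argument is genuinely immediate, as the author indicates.
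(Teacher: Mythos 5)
Your proof is correct and follows exactly the route the paper intends: the paper states Proposition \ref{prop4} as an ``immediate consequence of Theorem \ref{thm2} and Lemma \ref{lemma3},'' and your chain (Lemma \ref{lemma3} gives commutativity, the first lemma of Section 2 upgrades commutativity to Novikov, and Theorem \ref{thm2} finishes) is precisely the intended argument, with the circularity check being a sensible extra precaution.
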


If the field is algebraically closed, we have the following immediate
consequence of Theorem \ref{zelmanov thm} and Lemma \ref{lemma3}.

\begin{proposition}
\label{prop5}Let $A$ be a simple left-symmetric algebra over an
algebraically closed field $\mathbb{F}$ which is derivation or satisfying (%
\ref{eq4}). Then, $A$ is isomorphic to $\mathbb{F}.$
\end{proposition}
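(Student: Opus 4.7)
The plan is to chain together the two immediately preceding results. First, since $A$ is a simple left-symmetric algebra over $\mathbb{F}$ that is either a derivation algebra or satisfies (\ref{eq4}), Lemma \ref{lemma3} applies and forces $A$ to be commutative. Next, the opening lemma of Section~2.1 tells me that a commutative left-symmetric algebra has abelian associated Lie algebra and is automatically associative, Novikov, derivation, and satisfying (\ref{eq4}); so $A$ is in particular a Novikov algebra.

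Thus $A$ is a simple Novikov algebra over the algebraically closed field $\mathbb{F}$ of characteristic zero, and a direct invocation of Theorem \ref{zelmanov thm} yields $A\cong\mathbb{F}$, as desired. There is no genuine obstacle here: the proposition is a purely formal combination of Lemma \ref{lemma3} with Zelmanov's theorem, and the algebraic closure hypothesis plays a role only at the last step. As the example $A_{2,\mathbb{R}}$ and Theorem \ref{thm2} emphasize, that hypothesis is essential --- without it one already has a $2$-dimensional simple commutative (hence Novikov, derivation, and satisfying (\ref{eq4})) algebra over $\mathbb{R}$ which is not isomorphic to the base field.
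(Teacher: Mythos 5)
Your proof is correct and follows exactly the route the paper intends: the paper presents Proposition \ref{prop5} as an ``immediate consequence of Theorem \ref{zelmanov thm} and Lemma \ref{lemma3}'', and your chain (Lemma \ref{lemma3} gives commutativity, the first lemma of Section~2.1 upgrades commutative to Novikov, and Zelmanov's theorem then yields $A\cong\mathbb{F}$) is precisely the omitted argument. Your closing remark on where algebraic closure enters, with $A_{2,\mathbb{R}}$ as the witness, matches the paper's own discussion.
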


We close this section with the following propositions concerning
completeness of simple left-symmetric algebras.

\begin{proposition}
A complete Novikov algebra over a field of characteristic zero is not simple.
\end{proposition}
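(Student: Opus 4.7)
The plan is to argue by contradiction: assume $A$ is a simple complete Novikov algebra over $\mathbb{F}$. As already observed in the excerpt, simplicity forces $A\cdot A = A^{2} = A$. Defining the iterated ``powers'' $A^{\langle 1\rangle} = A$ and $A^{\langle k+1\rangle} = A^{\langle k\rangle}\cdot A$, a trivial induction gives $A^{\langle k\rangle} = A$ for every $k\geq 1$. The strategy is to contradict this by exhibiting some $n$ with $A^{\langle n+1\rangle} = \{0\}$, and this will come directly from the completeness hypothesis combined with the paper's earlier results.

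Completeness means every right multiplication $R_{x}$ is nilpotent, so $\mathrm{tr}(R_{x}) = 0$ for all $x\in A$ and therefore $I(A) = A$. Since $A$ is Novikov over a field of characteristic zero, Proposition \ref{prop radicals} applies and yields $N(A) = R(A) = I(A) = A$. By the definition of the right radical, $A$ itself is right-nilpotent: there exists $n\geq 1$ such that $R_{a_{1}}\circ\cdots\circ R_{a_{n}} = 0$ for every $a_{1},\dots,a_{n}\in A$. Unwinding this composition shows $((\cdots((x\cdot a_{n})\cdot a_{n-1})\cdots)\cdot a_{1}) = 0$ for all $x,a_{1},\dots,a_{n}\in A$, which is exactly the statement $A^{\langle n+1\rangle} = \{0\}$.

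Combining the two conclusions yields $A = A^{\langle n+1\rangle} = \{0\}$, contradicting the non-triviality built into the definition of ``simple''. There is no real obstacle here; the only minor bookkeeping step is matching the right-nilpotent condition (a statement about compositions of right multiplications) with the iterated powers $A^{\langle k\rangle}$, which is a direct rewriting of the definition of $R_{x}$.

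If a more self-contained proof were desired without invoking Proposition \ref{prop radicals}, an alternative route is available: by (\ref{eq2a}) the family $\{R_{x}:x\in A\}$ consists of pairwise commuting nilpotent operators, so Engel's theorem produces a nonzero $a\in A$ with $R_{x}(a) = 0$ for all $x$, i.e.\ $a\in T(A)$. Since $T(A)$ is a two-sided ideal and $A^{2} = A$ forces $T(A)\neq A$, the ideal $T(A)$ is a proper nonzero two-sided ideal, again contradicting simplicity.
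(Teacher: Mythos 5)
Your argument is correct, and it differs from the paper's in a way worth noting. The paper also reduces to right-nilpotency of $A$ (asserting it directly from completeness), but then invokes Proposition 1 of Zelmanov to conclude that $A^{2}$ is nilpotent and exhibits a proper nonzero two-sided ideal of the form $A^{2k}$. You instead justify the passage from completeness to right-nilpotency explicitly (via $\mathrm{tr}(R_{x})=0$, hence $I(A)=A$, hence $N(A)=A$ by Proposition \ref{prop radicals}) and then short-circuit the endgame: right-nilpotency kills the left-normed powers $A^{\langle n+1\rangle}$, which simplicity forces to equal $A$, so $A=\{0\}$ outright. This buys you a cleaner contradiction with no appeal to the nilpotency of $A^{2}$ and no need to identify which power is the proper ideal; the only external input is Proposition \ref{prop radicals} (which itself rests on Zelmanov). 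Your alternative argument is more elementary still and arguably preferable: by (\ref{eq2a}) the right multiplications commute, so the commuting nilpotent operators $R_{x}$ have a common nonzero kernel vector, giving $T(A)\neq\{0\}$; since $T(A)$ is a two-sided ideal and $T(A)=A$ would force $A^{2}=\{0\}$ against $A^{2}=A$, simplicity fails. That version uses nothing from Zelmanov at all and only the finite-dimensionality and the Novikov identity. Both of your routes are complete; the one minor point to make explicit in the second route is why finitely many commuting nilpotent operators admit a common kernel vector (take a nonzero product of maximal length in the generators $R_{e_{1}},\dots,R_{e_{m}}$ applied to a suitable vector), but this is standard linear algebra.
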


\begin{proof}
Let $A$ be a complete Novikov algebra over a field $\mathbb{F}$ of
characteristic zero. Since $A$ is complete, then $A$ is right-nilpotent. By
Proposition 1 of \cite{zelmanov}, $A^{2}$ is nilpotent. If $A^{2}=\left\{
0\right\} ,$ then $A$ is obviously not simple. If $A^{2}\neq \left\{
0\right\} ,$ then there exists $k\geq 1$ such that $A^{2k}\varsupsetneq
A^{2k+2}=\left\{ 0\right\} .$ Thus $A$ contains the non-trivial two-sided
ideal $A^{2k},$ and hence $A$ is not simple, as desired.
\end{proof}

\begin{proposition}
Let $A$ be a complete left-symmetric algebra over the field $\mathbb{F}=%
\mathbb{R}$ or $\mathbb{C}.$ If $A$ is derivation or satisfies (\ref{eq4}),
then $A$ is not simple.
\end{proposition}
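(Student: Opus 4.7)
The plan is to argue by contradiction, leveraging the classification of simple derivation or (\ref{eq4})-type left-symmetric algebras over $\mathbb{R}$ and $\mathbb{C}$ that has just been established in Propositions \ref{prop4} and \ref{prop5}. So I suppose, toward a contradiction, that $A$ is a complete left-symmetric algebra over $\mathbb{F} = \mathbb{R}$ or $\mathbb{C}$, that $A$ is either a derivation algebra or satisfies (\ref{eq4}), and that $A$ is simple.

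Applying Proposition \ref{prop4} in the real case and Proposition \ref{prop5} in the complex case, the hypothesis of simplicity forces $A$ to be isomorphic to one of only three algebras: the base field $\mathbb{F}$ (viewed as a one-dimensional algebra with $1\cdot 1 = 1$), or, in the case $\mathbb{F} = \mathbb{R}$, the two-dimensional algebra $A_{2,\mathbb{R}}$. The next step is to rule out each possibility directly from the definition of completeness, namely that every right multiplication $R_x$ is a nilpotent endomorphism of $A$.

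In each of the listed candidates, there is a distinguished nonzero idempotent $e$ with $e\cdot e = e$. If $A \cong \mathbb{F}$, then $e$ is the unit element. If $A \cong A_{2,\mathbb{R}}$, the multiplication table given in the paper shows that $e_1 \cdot e_1 = e_1$, so one takes $e = e_1$. In either situation the right multiplication $R_e$ satisfies $R_e(e) = e$, hence $R_e^n(e) = e \neq 0$ for every $n \geq 1$, and so $R_e$ fails to be nilpotent. This contradicts the completeness of $A$, so $A$ cannot be simple.

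There is no genuine obstacle in this argument since the hard work has already been done in the earlier classification results; the only verification required is the existence of a nontrivial idempotent in each candidate algebra, which is transparent from inspection of the defining multiplication. The real technical content of the statement therefore lives entirely in Propositions \ref{prop4} and \ref{prop5}, and the completeness hypothesis serves only to rule out the finitely many algebras produced by those propositions.
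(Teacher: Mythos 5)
Your argument is correct and follows the same route as the paper: contradiction via Propositions \ref{prop4} and \ref{prop5}, then observing that the resulting candidate algebras are not complete. The only difference is that you spell out why $\mathbb{F}$ and $A_{2,\mathbb{R}}$ fail to be complete (a nonzero idempotent $e$ gives $R_e^n(e)=e$), a detail the paper leaves implicit.
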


\begin{proof}
Suppose to the contrary that $A$ is simple. In the case $\mathbb{F}=\mathbb{R%
},$ we derive from Proposition \ref{prop4} that $A$ is isomorphic to either $%
A_{2,\mathbb{R}}$ or the field $\mathbb{R}.$ In the case $\mathbb{F}=\mathbb{%
C},$ we derive from Proposition \ref{prop5} that $A$ is isomorphic to the
field $\mathbb{F}=\mathbb{C}.$ In both cases, this leads to a contradiction
since $A_{2,\mathbb{R}}$ and the field $\mathbb{F}$ are not complete. It
follows that $A$ is not simple, as desired.
\end{proof}

\bigskip

Department of Mathematics, College of Science,

King Saud University, P.O.Box 2455, Riyadh 11451

Saudi Arabia

\smallskip

E-mail: mguediri@ksu.edu.sa

\end{document}